\newcommand{\done}{\item[\checkmark]}
\newcommand{\crossed}{\item[$\times$]}
\newcommand{\fonte}[1]{\Large #1}
\author{ M. Astudillo}
\address{Department of Mathematics, Federal University of São Paulo, São José dos Campos, 12231-280, SP, Brazil.}
\email{mastudillo86@gmail.com}
\author{ M. M. Cavalcanti}
\address{ Department of Mathematics, State University of
Maring\'a, 87020-900, Maring\'a, PR, Brazil.}
\email{mmcavalcanti@uem.br}
\thanks{Research of Marcelo M. Cavalcanti is partially supported by the CNPq Grant 300631/2003-0}
\author{ V. N. Domingos Cavalcanti}
\address{ Department of Mathematics, State University of
Maring\'a, 87020-900, Maring\'a, PR, Brazil.}
\email{vndcavalcanti@uem.br}
\thanks{Research of Val\'eria N. Domingos Cavalcanti is partially supported by the CNPq Grant 304895/2003-2}
\author{V. H. Gonzalez Martinez}
\address{Department of Mathematics, State University of
	Maring\'a, 87020-900, Maring\'a, PR, Brazil.}
\email{victor.hugo.gonzalez.martinez@gmail.com}
\thanks{Research of Victor Hugo Gonzalez Martinez was supported by grant CAPES}
\title[Boundary Control for a Generalized Wave Equation]{Boundary Control for a Generalized Wave Equation - Revisiting Russell's Method of Control}
\subjclass[2010]{58J45; 35L05; 49J20}
\begin{document}

\renewcommand{\theequation}{\thesection.\arabic{equation}}
\newtheorem{theorem}{Theorem}
\newtheorem{proposition}{Proposition}[section]
\newtheorem{lemma}{Lemma}[section]
\newtheorem{corollary}{Corollary}[section]
\newtheorem{definition}{Definition}[section]
\newtheorem{remark}{Remark}
\newtheorem{assumption}{Assumption} [section]
\newtheorem{example}{Example}[section]
\newtheorem*{acknowledgement}{Acknowledgements}
\newtheorem*{notation}{Notation}

\newenvironment{dem}{\smallskip \noindent{\bf Proof}: }
{\hfill \rule{0.25cm}{0.25cm}}

\def\supp{\operatorname{{supp}}}

%%%%%%%%%%%%%%%%%%%%%%%%%%%
\begin{abstract}
In this work we study the exact boundary controllability of a generalized wave equation in a nonsmooth domain with a nontrapping obstacle. In the more general case, this work contemplates the boundary control of a transmission problem admitting several zones of transmission. The result is obtained using the technique developed by David Russell, taking advantage of the local energy decay for the problem, obtained through the Scattering Theory as used by Vodev, combined with a powerful trace Theorem due to Tataru.
\end{abstract}

\maketitle

\qquad {\small Keywords:}~ Generalized wave equation, Boundary control problems, Exact boundary control, Robin control.

%\qquad {\small MSC:}  \boxed{\rm MARCELO}

%%%%%%%%%%%%%%%%%%%%%%%%%%%%%%%%%%%%%%%%%%%%%%%%%%%%%%%%%%%%%%%%%%%%%

%
\tableofcontents

\section{Introduction}

\setcounter{equation}{0}

\subsection{Description of the problem and main result}

Let $\Omega \subset \mathbb{R}^n$, $n \geq 2$ be a connected complement of a compact obstacle, $K$, with smooth boundary. Let also $\Omega_{N_0} \subset \Omega_{N_0-1}\cdots \Omega_{1}\subset \Omega_{0}:=\Omega$ be a finite number of open connected domains with smooth boundaries and bounded complements such that $\mathcal{O}_{k}=\Omega_{k-1} \setminus\Omega_{k}$, $k=1,\dots,N_0$, are bounded connected domains.

 Define the Hilbert space $H=\overset{N_0}{\underset{k=1}{\bigoplus}} L^2(\mathcal{O}_k;c_k(x)dx)\oplus L^2(\Omega_{N_0})$, which will be denoted by $L^2(\Omega)$. Let $P_k$, $k=1,\dots, N_0$, be the differential operator defined in $\mathcal{O}_k$, respectively, of the form
\begin{gather}\label{eq1.1.1}
	P_k=-c_k(x)^{-1}\sum_{i,j=1}^{n}\partial_{x_i}(g_{ij}^{(k)}(x)\partial_{x_j}),
\end{gather}
with smooth coefficients. Let $P$ be a self-fadjoint, positive operator on $H$ with absolutely continuous spectrum only, such that
\begin{gather}\label{eq2.1.1}
P|_{\mathcal{O}_{k}}=P_k, \ P|_{\Omega_{N_0}}=-\Delta=-\sum_{j=1}^{n}\partial_{x_j}^{2}.
\end{gather}
 We also suppose that $P$ is elliptic, i.e., the operator
\begin{gather*}
(P+1)^{-m}:H\longrightarrow \overset{N_0}{\underset{k=1}{\bigoplus}} H^{2m}(\mathcal{O}_{k})\oplus H^{2m}(\Omega_{N_0})
\end{gather*}
is bounded for every $m \geq 0$.

Set $R(\lambda)=(P-\lambda^2)^{-1}:H\longrightarrow H$ for $\Im m  \lambda<0$, and let $\chi \in C_{0}^{\infty}(\mathbb{R}^n)$, $\chi=1$ on $B=\{x \in \mathbb{R}^n: |x|\leq \rho_0 \}$, $\rho_0 \gg 1$. Then $R_\chi(\lambda)=\chi R(\lambda)\chi: H \longrightarrow H$ extends to a meromorphic function on $\mathbb{C}$ if $n$ is odd, and on the Riemann surface, $\Lambda$, of $\log \lambda$, if $n$ is even (e.g. see \cite{19}). Suppose that
\begin{gather}\label{eq3.1.1}
\|\lambda R_\chi(-i\lambda)\|<\infty, \quad \lambda \rightarrow 0, \quad \lambda>0,
\end{gather}
where $\|\cdot\|$ denotes the norm in $\mathcal{L}(H,H)$.

Denote by $\widetilde{H}_{k}$, $k=1,\dots,N_0$, the closure of $C_{0}^{\infty}(\mathcal{O}_{k})$ with respect to the norm
\begin{equation*}
\left(\int_{\mathcal{O}_{k}}\sum_{i,j=1}^{n}g_{ij}^{(k)}(x)\partial_{x_i}f\partial_{x_j}\overline{f}dx \right)^{\frac{1}{2}}
\end{equation*}
and by $\widetilde{H}_{N_0+1}$ the closure of $C_{0}^{\infty}(\Omega_{N_0})$ with respect to the norm \begin{equation*}
\left(\int_{\Omega_{N_0}}|\nabla_xf|^2dx\right)^{\frac{1}{2}}.
\end{equation*}
Set $\widetilde{H}(\Omega)=\overset{N_0+1}{\underset{k=1}{\bigoplus}}\widetilde{H}_{k}$, and $\mathcal{H}(\Omega)=\widetilde{H}(\Omega)\oplus H$.
Consider the operator
\begin{equation*}
G=-i\left(
\begin{array}{cc}
0& Id\\
-P&  0\\
\end{array}\right),
\end{equation*}
on the Hilbert space $\mathcal{H}$ with domain of definition
\begin{equation*}
D(G)=\{(u_1,u_2)\in \mathcal{H}:u_1 \in D(P),Pu_1\in H,u_2 \in \widetilde{H}\}.
\end{equation*}
It is easy to see that the operator $G$ is self-adjoint.

Denote by $u(t)$ the solution, obtained with the Stone's Theorem, of the Cauchy problem
\begin{equation}
\begin{cases}\label{eq4.1.1}
(\partial_{t}^{2}+ P)u=0 &~\hbox{ in }~ \Omega \times \mathbb{R}, \\
u = 0 &~\hbox{ on }~\partial \Omega \times \mathbb{R},\\
u(x,0)=f_1(x), ~\partial_t u(x,0)=f_2(x) &~\hbox{ on }~ \Omega.
\end{cases}
\end{equation}

Let $a>\rho_0$ and set $B_a=\{x \in \mathbb{R}^n: |x|\leq a\}$. Given any $m \geq 0$, set
\begin{eqnarray}\label{eq5.1.1}
	p_m(t) &=& \sup \left\{
	\begin{array}{c}
	\displaystyle \frac{\|\nabla_xu\|_{L^2(B_a\cap \Omega)}+\|\partial_t u\|_{L^2(B_a \cap \Omega)}}{\|\nabla_xf_1\|_{H^{m}(B_a \cap \Omega)}+\|f_2\|_{H^m(B_a \cap \Omega)}}, \\
		(0,0)\neq (f_1,f_2) \in  C^\infty(\Omega) \times C^\infty(\Omega), \supp f_j \subset B_a%
	\end{array}%
	\right\}
\end{eqnarray}%

The main result of Vodev \cite{1}, the source of inspiration of the present article, is the following:

\begin{theorem}\label{theo1.1.1}
	The following three statements are equivalent:
	\begin{enumerate}
		\item [i)] $\underset{t \rightarrow +\infty}{\lim}p_0(t)=0.$ 		
		\item [ii)] There exist constants $C,C_1>0$ so that
		\begin{equation*}
		\|\lambda R_\chi(\lambda)\|\leq C, \quad \lambda \in \mathbb{R}, \ |\lambda|> C_1.
		\end{equation*}
		
		\item [iii)] There exist constants $C, \gamma>0$ so that
		\begin{equation*}
		p_0(t)=\begin{cases}
		Ce^{-\gamma t} &~\hbox{ if }~ n \mbox{ \ is odd}, \\
		Ct^{-n} &~\hbox{ if }~ n \mbox{ \ is even}.
		\end{cases}
		\end{equation*}
	\end{enumerate}
\end{theorem}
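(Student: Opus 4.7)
The plan is to prove the cycle $\mathrm{(iii)} \Rightarrow \mathrm{(i)} \Rightarrow \mathrm{(ii)} \Rightarrow \mathrm{(iii)}$. The step $\mathrm{(iii)} \Rightarrow \mathrm{(i)}$ is immediate since both $e^{-\gamma t}$ and $t^{-n}$ vanish as $t\to+\infty$, so the bulk of the work is to pass from local energy decay to a uniform resolvent bound, and then to upgrade that bound to an explicit rate.

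For $\mathrm{(i)} \Rightarrow \mathrm{(ii)}$, my idea is to represent the cutoff resolvent as a Laplace transform of the cutoff wave propagator. Writing $U(t)=e^{itG}$ for the unitary group, Stone's theorem and the functional calculus for $P$ give, for $\Im\lambda<0$, a formula of the type
\[
\chi R(\lambda)\chi \;=\; \frac{i}{\lambda}\int_0^\infty e^{i\lambda t}\,\chi\,\sin(t\sqrt{P})\,\chi\,dt,
\]
so that boundary values on $\mathbb{R}$ are controlled by the decay of $\|\chi U(t)\chi\|$ on finite-energy data. From $p_0(t)\to 0$, together with the uniform boundedness principle, I would first promote pointwise decay to quantitative decay on a dense subspace (composing with a regularizer built from $(P+1)^{-m}$, which is bounded by the ellipticity hypothesis), and conclude a uniform bound on $\|\lambda R_\chi(\lambda)\|$ for $|\lambda|>C_1$. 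The low-frequency hypothesis (\ref{eq3.1.1}) takes care of a neighborhood of the origin, giving $\mathrm{(ii)}$.

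The main work lies in $\mathrm{(ii)} \Rightarrow \mathrm{(iii)}$. Starting from the uniform bound on the real axis together with the known meromorphic continuation of $R_\chi$ to $\mathbb{C}$ or to the Riemann surface $\Lambda$ of $\log\lambda$, I would use a Phragm\'en--Lindel\"of / three-line argument to propagate the high-frequency estimate into a resonance-free region: a strip $\{\Im\lambda>-\gamma\}$ in odd dimensions and a logarithmic neighborhood of $\mathbb{R}$ on $\Lambda$ in even dimensions. Combined with high-frequency estimates on the free resolvent and the low-frequency assumption (\ref{eq3.1.1}), this yields polynomial bounds on $R_\chi$ throughout the chosen region. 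Representing then
\[
\chi U(t)\chi f \;=\; \frac{1}{2\pi i}\int_{\mathbb{R}}e^{-it\lambda}\,\chi\bigl(R(\lambda-i0)-R(\lambda+i0)\bigr)\chi f\,d\lambda,
\]
and deforming the contour into the resonance-free region, the exponential factor $e^{-\gamma t}$ (odd $n$) or the polynomial rate $t^{-n}$ (even $n$) drops out, yielding the stated estimate on $p_0(t)$.

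The hard step will be the quantitative Phragm\'en--Lindel\"of/contour-deformation argument: one must show that the real-axis bound really propagates to a uniform polynomial estimate on $R_\chi$ in the chosen complex region, controlling at the same time boundary contributions at infinity when shifting the contour. The even-dimensional case is particularly delicate, as the logarithmic Riemann surface forces a subtler choice of contour and the behavior near $\lambda=0$ (through (\ref{eq3.1.1})) directly governs the polynomial factor $t^{-n}$.
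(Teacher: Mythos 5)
First, a point of context: the paper does not actually prove this theorem. It is imported verbatim as the main result of Vodev \cite{1}, and the paper's entire ``proof'' is the citation ``See \cite{1}''; item (iii) is then used as a black box in the controllability argument. So your proposal can only be measured against the strategy of the cited reference, and in outline it does follow that route: the cycle $\mathrm{(iii)}\Rightarrow\mathrm{(i)}\Rightarrow\mathrm{(ii)}\Rightarrow\mathrm{(iii)}$ is a sound skeleton, the Fourier--Laplace representation of the cutoff resolvent is the right bridge between propagator and resolvent, and the resonance-free strip (odd $n$) versus logarithmic region plus low-frequency analysis via (\ref{eq3.1.1}) (even $n$) is indeed where the dichotomy $e^{-\gamma t}$ versus $t^{-n}$ comes from.

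As a proof, however, there is a genuine gap at $\mathrm{(i)}\Rightarrow\mathrm{(ii)}$, which is precisely where the content of the theorem lies. Note first that $p_0(t)$ is defined as a supremum over all data supported in $B_a$, i.e.\ it is already an operator norm, so the uniform boundedness principle buys you nothing: item (i) is already uniform, just rate-free. And because it is rate-free, the representation $R(\lambda)=\int_0^\infty e^{i\lambda t}\,\sin(t\sqrt P)\,P^{-1/2}\,dt$ (your version of this formula also has the wrong normalization, but that is cosmetic) does \emph{not} converge absolutely on the real axis, so the boundary values of $\lambda R_\chi(\lambda)$ are not ``controlled by the decay of $\|\chi U(t)\chi\|$'' as written; composing with $(P+1)^{-m}$ improves regularity in $x$, not integrability in $t$. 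The missing idea is the bootstrap that converts qualitative decay into a quantitative, summable bound: one picks $t_0$ with $p_0(t_0)<1$ and iterates the cutoff propagator, using finite propagation speed to factor $\chi U(kt_0)\chi$ through cutoffs at intermediate times, obtaining a geometric bound whose Laplace transform then makes sense; this iteration is also where the odd/even distinction and the low-frequency hypothesis (\ref{eq3.1.1}) genuinely enter. Likewise, your $\mathrm{(ii)}\Rightarrow\mathrm{(iii)}$ is a program rather than an argument: the propagation of the real-axis bound to a strip is usually done by a Neumann-series perturbation of $R_\chi$ off the axis (giving a strip of width comparable to $1/\sup_{\mathbb R}\|R_\chi\|$) rather than by Phragm\'en--Lindel\"of alone, and the even-dimensional contour near $\lambda=0$ is exactly the delicate point you defer. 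None of this affects the paper, which treats the theorem as a citation, but it means the proposal is an outline of \cite{1}, not a self-contained proof.
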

\begin{proof}
See \cite{1}.
\end{proof}

A natural question that arises is the following: {\it What condition implies the veracity of item ii) above mentioned?} This is a question studied by several authors, as for example \cite{35}, \cite{33}, \cite{34}, \cite{32} and \cite{36}. Finally, using the concept of generalized bicharacteristics, introduced by R. B. Melrose an J. Sjöstrand in \cite{37} and \cite{38}, it was proved in \cite{33} and \cite{39}  that the condition that every generalized geodesic leaves any compact in a finite time is sufficient for ii) to be fulfilled, that is, the metric associated with the equation \eqref{eq4.1.1} must be {\bf non-trapping}. For this reason, we assume this condition, that is:
\begin{assumption}\label{ass1}
Every generalized geodesic leaves any compact in a finite time,
\end{assumption}
\noindent which will imply the local energy decay given by the third item of Theorem \ref{theo1.1.1}.

\begin{remark}\label{Omegastar}
Throughout this article $\Omega^\ast$ will denote a bounded domain of $\mathbb{R}^n$, with  piecewise smooth boundary $\partial \Omega^\ast$ and no cuspidal points, such that $ \Omega^\ast \subset B_a \cap \Omega$. We also assume that $\Omega^\ast$ lies on one side of its boundary. Under these assumptions it follows that the unit vector $\nu$ normal to the boundary, pointing outside, is defined almost everywhere on $\Omega^\ast$. Also, $\Omega^\ast$ is a Lipschitz bounded domain. We denote a $\delta$-neighborhood of $\Omega^\ast$ by $\Omega^{\ast}_{\delta}=\{y \in \mathbb{R}^n: |y-x|<\delta ~\hbox{ for all }~ x \in \Omega^\ast\}$.
\end{remark}
 Define $\widetilde{H}(\Omega^{\ast})= \overset{N_0}{\underset{k=1}{\bigoplus}}\widetilde{H}_{k}(\mathcal{O}_{k}\cap \Omega^{\ast})\oplus \widetilde{H}_{N_0+1}(\Omega_{N_0}\cap \Omega^\ast)$, $L^2(\Omega^\ast)=\overset{N_0}{\underset{k=1}{\bigoplus}}L^2(\mathcal{O}_{k}\cap \Omega^\ast,c_k(x)dx)\oplus L^2(\Omega_{N_0}\cap\Omega^\ast)$ and assume, as has been considered in the pioneer work of Russel \cite{3} (see definition 1.2 in \cite{3}) that,
\begin{assumption}\label{ass2}
There exist a bounded linear operator $P^\ast:\widetilde{H}(\Omega^{\ast}) \longrightarrow \widetilde{H}(\Omega)$ such that $P^\ast f|_{\Omega^\ast}=f$.
\end{assumption}
 It is worth mentioning that another important ingredient in the controllability of problem (\ref{eq6.1.1}) (below) is the trace regularity of the conormal derivative $\partial_\nu u$ on $\partial \Omega^{\ast}$. This is obtained by using  Theorem 2 in Tataru \cite{4} on each smooth component $\Gamma_i^{\ast}$ of the boundary $\partial \Omega^{\ast}$ such that
$\partial\Omega^{\ast}=\overset{l}{\underset{i=1}\bigcup} \Gamma_i^{\ast}$. Note that the whole boundary $\partial\Omega^{\ast}$  must lie strictly inside   some $\mathcal{O}_k$, ~$k\in \{1,\cdots, N_0\}$ or $ \Omega^{\ast}$ must contain the set $\mathcal{O}_{N_0}$ properly. This is required since the metric $g_{ij}^{(k)}$ associated to the operator $P$ chances on each $\mathcal{O}_k$ and also in $\Omega \backslash \mathcal{O}_{N_0}$. Below, (see Figure 1) we present some favorable geometries for $\Omega^\ast$, where the boundary of $\Omega^\ast$ is bold dotted and $N_0=2$.

In order to verify how to apply the Theorem 2 in \cite{4}, let $\Gamma_i^{\ast}$ be, for some $i\in \{1,\cdots,l\}$ a generic component of the boundary $\partial\Omega^{\ast}$ and let us define $\Sigma_i^{\ast}:= \Gamma_i^{\ast} \times (0,T)$. Setting $L:= (\partial_{t}^{2}+ P)$, from (\ref{eq6.1.1}) one has $Lu=0$ in $\Omega^\ast \times (0,T)$ and we shall prove that $\widetilde u\in H^1_{loc}(\Theta)$, for any open set $\Theta$ of $\Omega \times \mathbb{R}_t \subset \mathbb{R}^n_x \times \mathbb{R}_t$, where $\widetilde u$ represents the extension of $u$ in $\Omega$ by considering zero out of $\Omega_{\delta}^{\ast}$. Let $\phi \in C_0^{\infty}(\Theta)$ be a cut off function such that $\phi=1$ in a neighbourhood of $\Sigma_i^{\ast}$ in $\mathbb{R}^n_x \times \mathbb{R}_t$ with $\supp(\phi) \subset \mathcal{O}_k \times \mathbb{R}$ for some $k\in \{1,\cdots, N_0\}$ (or $\supp(\phi)\subset (\Omega\backslash \mathcal{O}_{N_0}) \times \mathbb{R}$ properly). Thus, $L (\phi u) = \phi Lu + [L,\phi]u= [L,\phi]u\in L^2_{loc}(\Theta)$, since $Lu=0$ and $[L,\phi]$ has order $1$. So that from Theorem 2 in \cite{4} we deduce that $\partial_\nu (\phi u)\in L_{loc}^2( \Gamma_i^{\ast} \times \mathbb{R}_t )$, from which we conclude that $\partial_\nu u\in L_{loc}^2(\Sigma_i^{\ast})$ as desired. Pasting these traces we can define the desired control in $L^2(\partial \Omega^* \times ]0,T[)$.

For more complex geometries as those considered in Figure 2 we have to assume the following hypothesis:
\begin{assumption}\label{Tataru}
	If $\partial_{t}^2\widetilde{u}+P\widetilde{u} \in L_{loc}^{2}(\Omega^\ast \times ]0,T[))$ then $\partial_\nu \widetilde u $ is square integrable in each smooth part of $\partial \Omega^* \times ]0,T[$.
\end{assumption}	
\begin{figure}[H]
	
	\subfigure{\includegraphics[width=5cm]{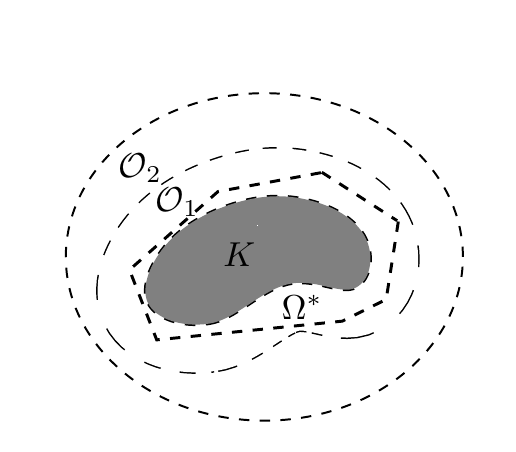}}
	%\label{fig:XXX}}
	\hfil
	\subfigure{\includegraphics[width=5cm]{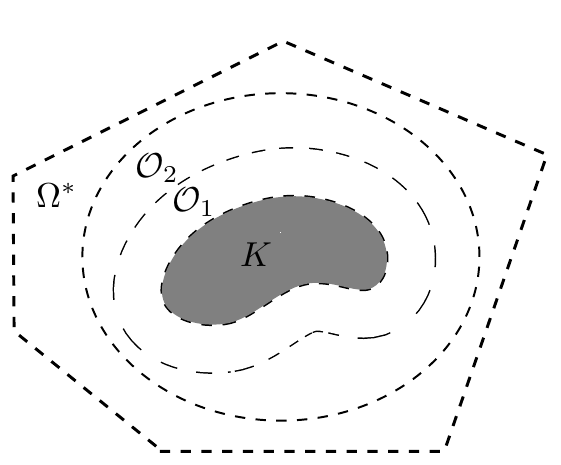}}
	%\label{fig:XXXX}}
	
	\caption{Admissible geometries for $\Omega^\ast$. The control is located on the whole boundary $\partial \Omega^{\ast} \times (0,T)$.}
	%\label{fig:XXXXX}
\end{figure}
\begin{figure}[H]
	
	\subfigure{\includegraphics[width=5cm]{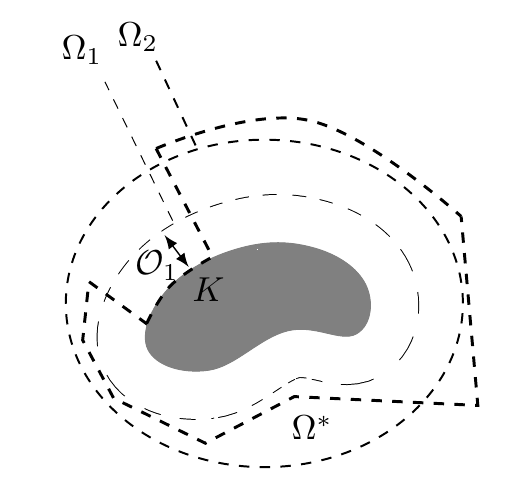}}
	%\label{fig:XXX}}
	%\hfil
	%\subfigure{\includegraphics[width=5cm]{4}}
	%\label{fig:XXXX}}
	
	\caption{Admissible complex geometry for $\Omega^\ast$. In this case $\partial_\nu u \in L^2(\partial \Omega^{\ast})$ is assumed because pieces of the boundary cross different sets. }
	%\label{fig:XXXXX}
\end{figure}

Our main Theorem reads as follows:
\begin{theorem}\label{theo4.1.1}
	Then, under the Assumptions \ref{ass1}, \ref{ass2} and \ref{Tataru}, there exist $T>0$ and a control $g \in L^2(\partial \Omega^\ast \times (0,T))$, such that for every pair $(f_1,f_2) \in \widetilde{H}(\Omega^\ast) \times L^2(\Omega^\ast)$ the solution $u \in H^{1}_{loc}(\Omega^\ast \times (0,T))$ of the problem
	\begin{equation}
	\begin{cases}\label{eq6.1.1}
	(\partial_{t}^{2}+ P)u=0 &~\hbox{ in }~ \Omega^\ast \times (0,T), \\
	B^{\ast}u = g &~\hbox{ on }~\partial \Omega^\ast \times (0,T),\\
	u = 0 &~\hbox{ on }~\partial \Omega \times (0,T),\\
	u(x,0)=f_1(x), ~\partial_t u(x,0)=f_2(x) &~\hbox{ on }~ \Omega^\ast,
	\end{cases}
	\end{equation}
	satisfies $u(\cdot,T)=\partial_t u(\cdot,T)=0$ on $\Omega^\ast$, where $B^\ast u=\alpha u+\beta \partial_\nu u$, with $\alpha, \beta \in \mathbb{R}$ and $\alpha^2+\beta^2 \neq 0$.
\end{theorem}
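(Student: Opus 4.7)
The strategy is Russell's method: use the free wave in $\Omega$ driven by extended Cauchy data to furnish a first candidate for the boundary control, then upgrade the resulting approximate null-controllability to exact null-controllability by a Neumann-series argument combining local energy decay with the time reversibility of $\partial_t^2+P$.

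Given $(f_1,f_2)\in \widetilde H(\Omega^\ast)\times L^2(\Omega^\ast)$, first extend them to $(F_1,F_2):=P^\ast(f_1,f_2)\in \widetilde H(\Omega)\times L^2(\Omega)$ via Assumption \ref{ass2}, multiplying by a smooth cut-off if necessary to ensure $\supp F_j\subset B_a$ (permissible since $\Omega^\ast\subset B_a\cap\Omega$). By Stone's theorem applied to the self-adjoint $G$, obtain the finite-energy solution $U$ of (\ref{eq4.1.1}) with data $(F_1,F_2)$. The candidate control is $g_U:=B^\ast U|_{\partial\Omega^\ast\times(0,T)}$, which is well-defined in $L^2(\partial\Omega^\ast\times(0,T))$ by the localization/Tataru argument detailed right before Assumption \ref{Tataru} for the simple geometries of Figure 1, and directly by Assumption \ref{Tataru} for the complex geometries of Figure 2. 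Choosing $g=g_U$, uniqueness for (\ref{eq6.1.1}) forces $u=U|_{\Omega^\ast\times(0,T)}$, so the state at time $T$ equals $(U(T),\partial_t U(T))|_{\Omega^\ast}$.

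Introduce the residue map $\Phi_T:\widetilde H(\Omega^\ast)\times L^2(\Omega^\ast)\to \widetilde H(\Omega^\ast)\times L^2(\Omega^\ast)$ defined by $\Phi_T(f_1,f_2):=(U(T),\partial_t U(T))|_{\Omega^\ast}$. Under Assumption \ref{ass1}, Theorem \ref{theo1.1.1}(iii) ensures $p_0(T)\to 0$ as $T\to\infty$ (exponentially when $n$ is odd, polynomially when $n$ is even), and since $\supp F_j\subset B_a$ we obtain
\[
\|\Phi_T(f_1,f_2)\|_{\widetilde H(\Omega^\ast)\times L^2(\Omega^\ast)}\le C\,p_0(T)\,\|(f_1,f_2)\|_{\widetilde H(\Omega^\ast)\times L^2(\Omega^\ast)}.
\]
Fix $T$ so large that $C\,p_0(T)<1$, making $\Phi_T$ a strict contraction on the energy space.

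To pass from smallness to exact nullification, exploit linearity together with time reversibility: the involution $J(a,b):=(a,-b)$ composed with $t\mapsto T-t$ intertwines the problem of steering $(0,0)$ at $t=0$ to a target $(\xi_1,\xi_2)$ at $t=T$ with the original problem of steering $(\xi_1,-\xi_2)$ to $(0,0)$. Thus cancelling the remainder $-\Phi_T(f_1,f_2)$ reduces to the original controllability problem with new data $J\Phi_T(f_1,f_2)$. Iterating, the Neumann series $\sum_{k\ge 0}(J\Phi_T)^{k}$ converges because $\|J\Phi_T\|=\|\Phi_T\|<1$, and the corresponding series of trace controls assembles into a single $g\in L^2(\partial\Omega^\ast\times(0,T))$ that drives $(f_1,f_2)$ exactly to $(0,0)$ at time $T$. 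The main technical burden lies in two points: arranging $P^\ast$ so that $\supp F_j\subset B_a$ without losing the uniform bound on $\|(F_1,F_2)\|_{\widetilde H(\Omega)\times L^2(\Omega)}$, and tracking, uniformly across the iteration, the continuity constants from the Tataru-type trace theorem and Assumption \ref{Tataru} so that the Neumann series converges not merely in the energy space but in the control space $L^2(\partial\Omega^\ast\times(0,T))$.
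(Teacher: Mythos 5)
Your proposal is correct in substance and is a version of the same Russell-type argument the paper uses, but your Neumann series is set up on the other side of the construction, and this changes where the work lands. You iterate on \emph{controls}: run the free flow in $\Omega$, measure the residue $\Phi_T(f_1,f_2)$ on $\Omega^\ast$ at time $T$, cancel it by a time-reversed free flow, and sum the resulting sequence of boundary traces. The paper instead iterates on \emph{data}: it seeks extended initial data $(w_0,w_1)$ such that a single free solution $\widetilde u = w_\delta - z$ (the forward flow minus the backward flow launched from the cut-off state $\varphi w_\delta(\cdot,T)$) already matches $(f_1,f_2)$ at $t=0$ and vanishes at $t=T$ on $\Omega^\ast$; this is the fixed-point equation $(I-K_T)(w_0,w_1)=(f_1,f_2)$ with $K_T = R S_T^\ast M_\varphi S_T E$, a contraction for large $T$ because the forward and backward local-energy decay estimates (Corollaries \ref{cor1.1.1} and \ref{cor2.1.1}) each contribute a factor $p_0(T)$. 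The practical difference shows up at the trace step: the paper applies Tataru's theorem (or Assumption \ref{Tataru}) exactly once, to the single limiting solution $\widetilde u$, so only qualitative $L^2$-regularity of $\partial_\nu\widetilde u$ on each smooth piece is needed; your scheme must sum infinitely many boundary traces, so you need the trace map from finite-energy free solutions with data in a fixed compact set into $L^2(\partial\Omega^\ast\times(0,T))$ to be \emph{quantitatively} bounded, uniformly along the iteration. You correctly flag this, and Tataru's estimate does supply such a bound, but it is an extra verification that the paper's arrangement simply avoids.

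Two points in your write-up need tightening to become a proof. First, ``cancelling the remainder reduces to the original controllability problem with new data $J\Phi_T(f_1,f_2)$'' is, as literally written, circular; it must be unrolled: the time-reversed correction cancels the residue at $t=T$ exactly but perturbs the data at $t=0$ by $(J\Phi_T)^2(f_1,f_2)$, so what converges is the alternating sum of forward and backward free solutions attached to $(J\Phi_T)^k(f_1,f_2)$, and one must check that the $t=0$ and $t=T$ evaluations pass to the limit (they do, by energy conservation). Second, at \emph{every} step the state being re-extended and fed to the reversed flow must first be localized near $\Omega^\ast$ (the paper's multiplication by $\varphi$ supported in $\Omega^{\ast}_{3\delta/4}$), since the decay estimates of Theorem \ref{theo2.1.1} require data supported in $B_a$; your ``multiply by a smooth cut-off if necessary'' handles step $0$ only, whereas the uniform bound $\|\Phi_T\|\le C\,p_0(T)$ on the energy space over $\Omega^\ast$ implicitly uses a bounded extension at each iterate, with a constant that must not degrade along the iteration.
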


%We observe that the symbol $\frac{\partial u}{\partial \nu}$ denotes the conormal derivative of $u$, in the above Theorem.

We observe that in both configurations of Figure 1, the control $g$ is located on the whole boundary $\partial \Omega^\ast \times (0,T)$. However, it is possible to construct certain geometries letting a piece of $\partial \Omega^\ast \times (0,T)$ without control when $\Omega^{\ast}$ is properly contained in $\mathcal{O}_1$ and the  piece of $\partial \Omega^\ast \times (0,T)$ without control is precisely part of the boundary of $\partial K \times (0,T)$ according to Figure 3.
%In this case we must consider the obstacle $K$ with the same metric induced in $\mathcal{O}_1$.

\begin{figure}[H]
	
	\subfigure{\includegraphics[width=5cm]{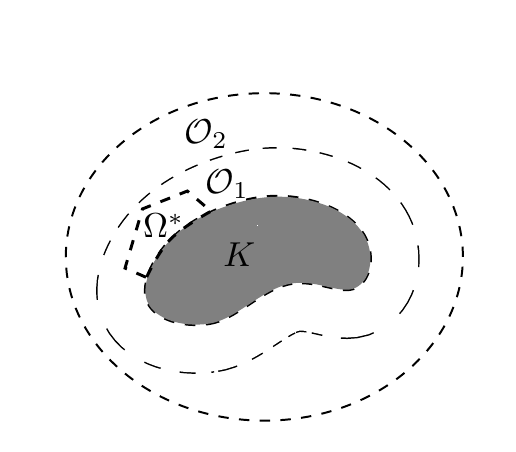}}
	%\label{fig:XXX}}
	\hfil
	
	\caption{Admissible geometry for $\Omega^\ast$. There is no control in the portion of $\partial \Omega^{\ast}$ which coincides with the boundary of $\partial K$.}
	%\label{fig:XXXXX}
\end{figure}

An illustrative example of the aforementioned situation occurs in case of transmission problems in bounded domains as considered in Cardoso and Vodev \cite{23}. Indeed, for this purpose let $\Omega_1'\subset \Omega_2' \subset \cdots \subset \Omega_{m+1}' \subset \mathbb{R}^n$ ;~ $m\geq 1$, ~$n\geq 2$, be bounded, strictly convex domains with
smooth boundaries $\Gamma_k' = \partial \Omega_k'$;~ $\Gamma_k' \cap \Gamma_{k+1}'=\emptyset$. Let also $\Omega_0' \subset \Omega_1'$ be a bounded domain with smooth boundary $\Gamma_0' = \partial \Omega_0'$ such that $\mathbb{R}^n \backslash \Omega_0'$ is connected. Let us consider the following mixed boundary value problem:
\begin{equation}\label{TP}
\left\{
\begin{aligned}
& (\partial_t^2 - c_k^2\Delta) u_k =0 ~\hbox{ in } (\Omega_k'\backslash \Omega_{k-1}')\times (0,T);~k=1,\cdots,m+1,\\
&B u_1=0 ~\hbox{ on } \Gamma_0' \times (0,T),\\
&u_k = u_{k+1}; ~\partial_\nu u_k = \partial_\nu u_{k+1} ~\hbox{ on } \Gamma_k' \times (0,T),~k=1,\cdots, m,\\
&u_k(x,0) = f_k(x);~\partial_t u_k(x,0)=g_k(x);~k=1,\cdots, m+1,
\end{aligned}
\right.
\end{equation}
$\partial_\nu$ denotes the normal derivative to the boundary, $B=Id$ (Dirichlet boundary conditions) or $B=\partial_\nu$ (Neumann boundary conditions), $c_k$ are constants satisfying
\begin{eqnarray}\label{rel velocities}
c_1> c_2 > \cdots > c_{m+1}>0.
\end{eqnarray}

Equation (\ref{TP}) describes the propagation of acoustic waves in different
media with different speeds $c_k$, $k=1, \cdots ,m+1$, which do not penetrate into $\Omega_0'$. The following crucial assumption is also necessary:
\begin{assumption}\label{main assump}
Every generalized ray in $\Omega_1' \backslash \Omega_0'$ hits the boundary $\Gamma_1'$.
\end{assumption}
Clearly, Assumption \ref{main assump} is fulfilled if $\Omega_0'$ is strictly convex. However, the class of the domains for which Assumption $\ref{main assump}$
is satisfied is much larger than the class of strictly convex domains. Setting
$$H:= \bigoplus_{k=1}^m L^2\left(\Omega_k' \backslash \Omega_{k-1}', c_k^{-2} \,dx\right),$$
and assuming that (\ref{rel velocities}) and Assumption \ref{main assump} hold, one has the following very useful result regarding problem:
\begin{equation}\label{AP}
\left\{
\begin{aligned}
& (\partial_t^2 u - c_k^2\Delta) u_k =0 ~\hbox{ in } (\Omega_k'\backslash \Omega_{k-1}')\times (0,+\infty);~k=1,\cdots,m+1,\\
& (\partial_t^2 - c_{m+1}^2\Delta) u_{m+1}=0 ~\hbox{ in } (\mathbb{R}^n \backslash \Omega_{k-1}')\times (0,+\infty),\\
&Bu_1=0 ~\hbox{ on } \Gamma_0' \times (0,T),\\
&u_k = u_{k+1}; ~\partial_\nu u_k = \partial_\nu u_{k+1} ~\hbox{ on } \Gamma_k' \times (0,+\infty),~k=1,\cdots, m,\\
&u_k(x,0) = f_k(x);~\partial_t u_k(x,0)=g_k(x);~k=1,\cdots, m+1,
\end{aligned}
\right.
\end{equation}
\begin{theorem}[Theorem 1.5 in Cardoso-Vodev \cite{23}]
Under the Assumption \ref{main assump} and assuming that (\ref{rel velocities}) holds, for every compact $K \subset \mathbb{R}^n \backslash \Omega_0'$, there exists a constant $C_K$ so that the solution $u=(u_1, \cdots, u_{m+1})$ of (\ref{AP})satisfies the estimate (for $t\gg1$)
\begin{eqnarray}\quad\label{main estimate}
||\nabla_x u(\cdot, t)||_{L^2(K)} + || \partial_t u(\cdot, t)||_{L^2(K)} \leq C_K p_0(t) \left(||\nabla_x u(\cdot,t)||_{L^2(K)} + ||\partial_t u(\cdot, 0)||_{L^2(K)} \right),
\end{eqnarray}
provided $\supp u(\cdot,0),~\supp \partial_t u(\cdot,0)\subset K$, where
\begin{equation*}
p_0(t) =\left\{
\begin{aligned}
& e^{-\gamma t} ~\hbox{ if } n \hbox{ is odd},\\
& t^{-n}~ \hbox{ if } n \hbox{ is even},
\end{aligned}
\right.
\end{equation*}
with a constant $\gamma>0$ independent of $t$.
\end{theorem}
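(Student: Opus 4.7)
The plan is to deduce the stated decay of the local energy from the equivalence Theorem \ref{theo1.1.1} applied to the self-adjoint operator $P$ naturally associated with the transmission problem (\ref{AP}). Concretely, I would define $P$ to act as $-c_k^2\Delta$ on each shell $\Omega_k'\setminus\Omega_{k-1}'$ ($k=1,\dots,m$) and as $-c_{m+1}^2\Delta$ on $\mathbb{R}^n\setminus\Omega_m'$, on the weighted Hilbert space $H\oplus L^2(\mathbb{R}^n\setminus\Omega_m',c_{m+1}^{-2}\,dx)$, with domain encoding the transmission conditions $u_k=u_{k+1}$, $\partial_\nu u_k=\partial_\nu u_{k+1}$ on each $\Gamma_k'$ and the boundary condition $Bu_1=0$ on $\Gamma_0'$. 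Integration by parts using these conditions shows that the associated sesquilinear form is symmetric and non-negative, so $P$ is self-adjoint and positive; absolute continuity of the spectrum follows from a Rellich-type unique continuation in the unbounded exterior component, and the elliptic mapping property behind (\ref{eq3.1.1}) follows from interior elliptic regularity in each shell combined with the classical elliptic estimates for transmission problems across the smooth interfaces $\Gamma_k'$.

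With the set-up in place, I would check the low-frequency condition (\ref{eq3.1.1}) by the standard limiting absorption principle for Laplace-type operators on the exterior of a compact set: outside a large ball $P=-c_{m+1}^2\Delta$, so $R_\chi(-i\lambda)$ behaves to leading order like the cutoff free resolvent, whose $O(1/\lambda)$ bound is well known. Theorem \ref{theo1.1.1} then reduces the claim to the high-frequency resolvent bound $\|\lambda R_\chi(\lambda)\|\le C$ for $\lambda\in\mathbb{R}$, $|\lambda|>C_1$. I would attack this via the Vainberg parametrix scheme: writing $R_\chi(\lambda)=E_\chi(\lambda)+\text{(compact correction)}$, where $E_\chi(\lambda)$ is built by propagating initial data with the wave group up to a large but fixed time $T_0$, one reduces the resolvent estimate to showing that the wave flow for $P$ empties the support of $\chi$ in time $\le T_0$, i.e., to a non-trapping statement for the generalized bicharacteristic flow of $P$.

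The genuine difficulty, and the main obstacle of the proof, is the geometric analysis of the generalized bicharacteristic flow of the layered operator $P$ in the sense of Melrose-Sj\"ostrand. At each interface $\Gamma_k'$ a bicharacteristic branches according to Snell's law with speeds $c_k,c_{k+1}$ into a reflected and a refracted ray, and it may graze at the critical angle. The assumed ordering $c_1>c_2>\cdots>c_{m+1}>0$ is precisely what prevents trapping between two consecutive shells: a ray crossing into the faster medium refracts away from the normal and, after finitely many crossings, hits $\Gamma_m'$ transversally and escapes to infinity, while a ray undergoing total internal reflection at $\Gamma_k'$ returns into the shell and, by strict convexity of $\Gamma_{k-1}'$ and $\Gamma_k'$, strikes the next interface at an angle strictly closer to the normal, so that after finitely many bounces it transmits. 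The only potentially trapped rays are those confined to the innermost cavity $\Omega_1'\setminus\Omega_0'$, and these escape by Assumption \ref{main assump}. A careful treatment of the glancing directions at each of the $m+1$ interfaces, and a uniform time bound over all branches of the flow, is the core technical burden.

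Once every generalized ray is shown to leave any compact subset of $\mathbb{R}^n\setminus\Omega_0'$ in a uniform finite time, the propagation-of-singularities results used in \cite{33,39} furnish the desired high-frequency bound \textit{ii)} of Theorem \ref{theo1.1.1}. That theorem then yields the dichotomy $p_0(t)=O(e^{-\gamma t})$ for odd $n$ and $p_0(t)=O(t^{-n})$ for even $n$, and applying the definition (\ref{eq5.1.1}) of $p_0(t)$ to initial data supported in the compact $K\subset\mathbb{R}^n\setminus\Omega_0'$ delivers the stated estimate (\ref{main estimate}).
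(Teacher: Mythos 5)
This statement is imported verbatim from Cardoso--Vodev \cite{23}; the paper offers no proof of its own beyond the citation, so the only meaningful question is whether your proposed route would actually work. It would not, and the obstruction is precisely the one the present paper spells out in its own introduction. Your argument funnels everything through a Vainberg-type reduction to the claim that the generalized bicharacteristic flow of the layered operator is non-trapping, i.e.\ that every generalized ray leaves a compact set in uniform finite time. For the transmission problem this is false: each time a ray hits an interface $\Gamma_k'$ it \emph{splits} into a reflected and a refracted branch, so the flow generates a branching tree of rays, and infinitely many branches never reach $\Gamma_{m+1}'$ --- the paper states this explicitly (``Consequently, there are infinitely many rays that do not reach the boundary $\Gamma_{m+1}'$\,''). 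Your geometric claim that ``after finitely many bounces it transmits'' is true of each individual branch but irrelevant: the reflected branches persist forever, and Assumption \ref{main assump} only controls rays in the innermost shell $\Omega_1'\backslash\Omega_0'$, not this branching trapping in the outer shells. Consequently the non-trapping results of \cite{33}, \cite{39} in the form you invoke do not apply, and the high-frequency resolvent bound cannot be obtained this way.

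The role of the velocity ordering $c_1>c_2>\cdots>c_{m+1}$ is therefore not to exclude trapped rays but to guarantee, quantitatively, that the trapped branches carry a negligible amount of energy: since each medium is faster than the one surrounding it, a ray hitting an outward-facing interface always produces a transmitted branch into the slower exterior medium, so energy leaks outward at every reflection, with reflection coefficients strictly below one away from glancing. Turning this into the uniform cutoff resolvent estimate $\|\lambda R_\chi(\lambda)\|\leq C$ requires a parametrix construction that tracks these reflection/transmission coefficients along the whole tree of rays and handles the glancing directions where the reflection coefficient degenerates to one; this is the actual content of \cite{23} and of the earlier works of Popov and Vodev on the transmission problem, and it is entirely absent from your sketch. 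The surrounding scaffolding you describe (self-adjoint realization, low-frequency behaviour, passage from the resolvent bound to the decay rate via Theorem \ref{theo1.1.1}, and then to the estimate \eqref{main estimate} via the definition \eqref{eq5.1.1}) is reasonable, but the central analytic step is missing and the geometric step you substitute for it is incorrect.
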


Let us consider, according to aforementioned notation, $\Omega^\ast$ be a bounded domain of $\mathbb{R}^n$, with boundary $\partial \Omega^\ast$ piecewise smooth with no cuspidal points, such that $ \Omega^\ast \subset B_a \cap (\mathbb{R}^n\backslash \Omega_0')$.
We are interested in studying the controllability of the solutions of the mixed boundary value problem (\ref{eq6.1.1}) but now in connection with transmission problems. An easy structure to be considered is that one when $\Omega^{\ast}= \Omega_{m+1}'$. In this case the exact controllability problem reads as follows: to find a control $g\in L^2(\Gamma_{m+1}' \times (0,T))$ which drives the problem
\begin{equation}\label{CP}
\left\{
\begin{aligned}
& (\partial_t^2 u - c_k^2\Delta) u_k =0 ~\hbox{ in } (\Omega_k'\backslash \Omega_{k-1}')\times (0,T);~k=1,\cdots,m+1,\\
&B u_1=0 ~\hbox{ on } \Gamma_0' \times (0,T),\\
&u_k = u_{k+1}; ~\partial_\nu u_k = \partial_\nu u_{k+1} ~\hbox{ on } \Gamma_k' \times (0,T),~k=1,\cdots, m,\\
& B^{\ast} u_{m+1}=g~\hbox{ on }\Gamma_{m+1}'\times (0,T),\\
&u_k(x,0) = f_k(x);~\partial_t u_k(x,0)=g_k(x);~k=1,\cdots, m+1,
\end{aligned}
\right.
\end{equation}
to the state $u(T) = \partial_t u(T)=0$, with $u=(u_1, \cdots,u_{m+1})$.

The above case, although interesting, possesses a smooth boundary $\Gamma_{m+1}'$. The most interesting case occurs when $\Omega^{\ast}$ is a bounded set with boundary $\partial \Omega^\ast$ piecewise smooth with no cuspidal points and suitably accommodated inside the transmission zone. Note that, as before, $\partial \Omega^{\ast}$ must lie strictly in some $\Omega_k'$,~for $k\in \{1,\cdots, m\}$ or $\Omega^{\ast}$ must contain the set $\Omega_{m+1}'$ properly.  Please, find in Figures 4 and 5, some illustrations of favorable geometries for $m=2$.
\begin{figure}[H]
	\subfigure{\includegraphics[width=5cm]{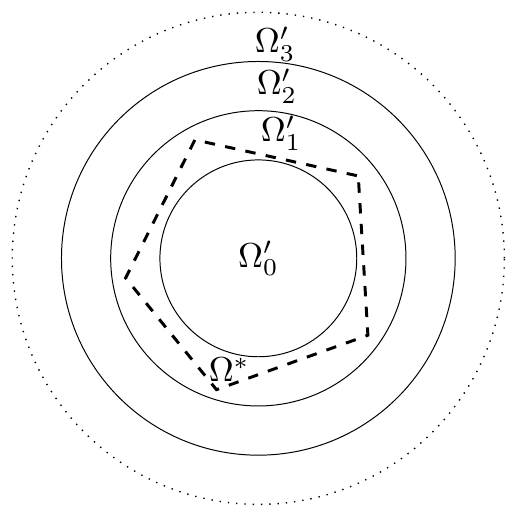}}
	%\label{fig:XXX}}
	\hfil
	\subfigure{\includegraphics[width=5cm]{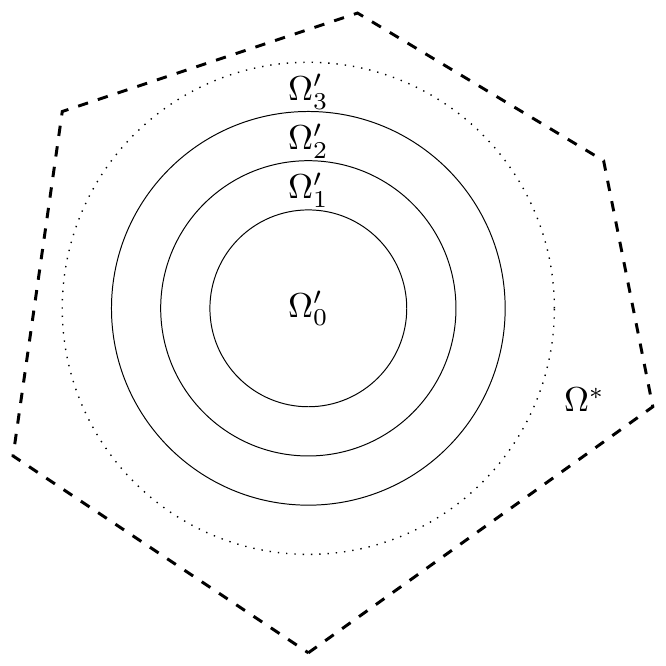}}
	%\label{fig:XXXX}}
	
	\caption{Admissible geometries for $\Omega^\ast$.}
	%\label{fig:XXXXX}
\end{figure}
\begin{figure}[H]
	\subfigure{\includegraphics[width=5cm]{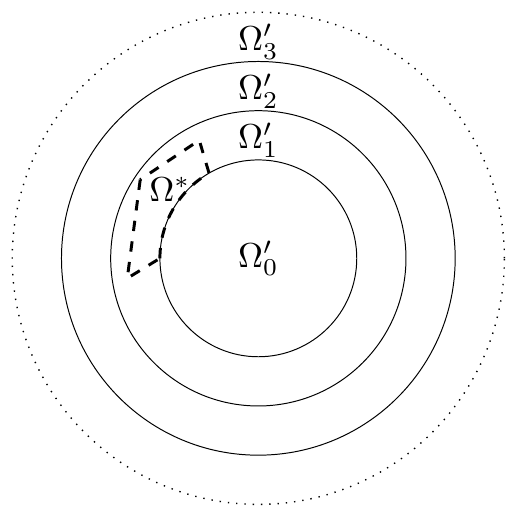}}
	%\label{fig:XXX}}
	\hfil
	%\subfigure{\includegraphics[width=5cm]{2}}
	%\label{fig:XXXX}}
	
	\caption{Admissible geometry for $\Omega^\ast$.}
	%\label{fig:XXXXX}
\end{figure}

In the case when there is no transmission of waves  (which corresponds to taking $m=0$ in the setting above), the controllability follows from the results in  Bardos,  Lebeau and  Rauch \cite{9}. In fact, in \cite{9}, a more general situation is studied, namely, $\Omega_1'$ is not necessarily strictly convex and the control $g$ is supposed to hold on a nonempty subset $\tilde{\Gamma}_1'$ of $\Gamma_1'$.
Then Assumption \ref{main assump} is replaced by the assumption that every generalized ray in $\Omega_1' \backslash \Omega_0'$ hits $\tilde{\Gamma}_1'$ at a nondiffractive point. The situation changes drastically in the case of transmission (which corresponds to taking $m\geq1$ in the setting above) due to the fact that the classical flow for this problem is much more complicated. Indeed, when a ray in $\Omega_{k+1}' \backslash \Omega_k'$ hits the boundary $\Gamma_k'$ (if $1\leq k\leq m$ or the boundary
$\Gamma_{k+1}'$ (if $0\leq k\leq m-1$), it splits into two rays-one staying in $\Omega_{k+1}' \backslash \Omega_k'$ and another entering into
$\Omega_k' \backslash \Omega_{k-1}'$ or $\Omega_{k+2}\backslash \Omega_{k+1}$, respectively. Consequently, there are infinitely many rays that do not reach
the boundary $\Gamma_{m+1}'$ where the dissipation is active. The condition (\ref{rel velocities}), however, guarantees that these rays carry a negligible amount of energy, and therefore (\ref{rel velocities}) is crucial for the controllability to hold (see Cardoso and Vodev \cite{23} and references therein for more details).

\subsection{Literature overview}

We start this subsection by quoting the pioneer article due to Russel \cite{3} in which, taking advantage of certain local decay rate estimates for the wave equation in the exterior of star-shaped regions (see the Scattering Theory of Lax and Phillips \cite{12}), he establishes the exact controllability of weak solutions of the wave equation by considering a Dirichlet control acting on the boundary $\partial \Omega$ of a bounded domain $\Omega$, where $\partial \Omega$ is assumed to be piecewise smooth.

Later on, exploiting ideas from \cite{3}, Lagnese \cite{5} proved the exact controllability of regular solutions of the wave equation by considering a boundary control of Dirichlet, Neumann or Robin type, posed in a bounded domain $\Omega$ with smooth boundary $\partial \Omega$. In this article Lagnese \cite{5} gave an affirmative answer for a certain class of hyperbolic operators by showing that the exact controllability can be achieved in any time $T$ which exceeds the diameter of $\Omega$.

It is worth mentioning other papers in connection with the techniques developed in \cite{5} and \cite{3} as, for instance  \cite{6}, \cite{7}. In \cite{6} the authors study the exact controllability for a class of hyperbolic linear partial differential equation with coefficients constants, which includes the Klein-Gordon equation, by considering piecewise smooth domains on the plane and boundary control of Robin type acting on the whole boundary. In \cite{7} the authors study the local asymptotic behavior of the solutions of the linear Klein–Gordon equation in a piecewise smooth domain $\Omega$. For this purpose, instead of using a suitable scattering theory for the associated problem, the authors obtained new local decay rate estimates by exploiting explicit formulas for the solution of the corresponding Cauchy problem. In addition, the authors use the local decay of energy to study the exact boundary controllability (Robin control) for the linear Klein-Gordon equation in piecewise smooth domains. Another interesting reference in the same spirit, now for linearly coupled wave equations, can be found in \cite{25}.

In \cite{10} Lions uses its Hilbert Uniqueness Method (HUM) and treats the control problem with initial data $L^2(\Omega) \times H^{-1}(\Omega)$, in which, only domains with smooth boundary are considered. The controllability for the wave equation in nonsmooth domains has been first studied in \cite{8}, by Grisvard. There, Grisvard uses HUM to study the wave equation in polygons and polyhedrons. Contrarily, Russell’s approach \cite{3} has been used to study control for wave equation with finite energy initial state in nonsmooth domains.

Another important paper which deals with nonsmooth domains is due to Burq \cite{17}. The paper makes references to the exact controllability of the wave equation with Dirichlet boundary conditions in a bounded corner open subset $\Omega$ of $\mathbb{R}^2$. Under suitable hypotheses on the regularity of $\Omega$, the condition of geometrical control introduced by C. Bardos, G. Lebeau and J. Rauch \cite{9} is generalized. Via some results on the propagation of the $H^1$-singularities, it is mainly shown that the geometrical control condition is a sufficient one for the exact boundary controllability of the wave equation in $\Omega$.

Although there exists a truly long bibliography regarding the controllability of the wave equation with constant coefficients, very few has been published for the wave equation with variable coefficients. Among them we would like to mention the works of Lasiecka, Triggiani and Yao \cite{22}, \cite{21}, \cite{31}, \cite{30}.

The authors consider a general second-order hyperbolic equation defined on an open bounded domain $\Omega$ with smooth boundary $\partial \Omega$ of class $C^2$ with variable coefficients in both the elliptic principal part and in the first-order terms as well. Initially, no boundary conditions B.C. are imposed. Their main result (Theorem 3.5) is a reconstruction, or inverse, estimate for solutions under suitable conditions on the coefficients of the principal part, the $H^1(\Omega) \times L^2(\Omega)$-energy at time $t = T$, or at time $t = 0$, is dominated by the $L^2(\Sigma)$-norms of the boundary traces $\partial \nu_{A}(\cdot)$ and $\partial_t(\cdot) $ modulo an interior lower-order term. Once homogeneous B.C. are imposed, their results yield, under a
uniqueness theorem, needed to absorb the lower-order term, continuous observability estimates for both the Dirichlet and Neumann case, with an explicit, sharp observability time; hence, by duality, exact controllability results. Furthermore, no artificial geometrical conditions are imposed on the controlled part of the boundary in the Neumann case.

In contrast with existing literature, the first step of their method employs a Riemannian geometry approach to reduce the original variable coefficient principal part problem to a problem on an appropriate Riemannian manifold determined by the coefficients of the principal part, where the principal part is the d'Alembertian. In their second step, they employ explicit Carleman estimates at the differential level to deal with the variable first-order energy level terms. In their third step, the authors employ microlocal analysis yielding a sharp trace estimate, to remove artificial geometrical conditions on the controlled part of the boundary, in the Neumann case.

It is worth mentioning the work of Burq \cite{18}, which deals with variable coefficients as well and in which the author considers the problem of the exact boundary controllability of the linear wave equation with Dirichlet control. Using the so-called H-measures or microlocal defect measures introduced by L. Tartar and P. G\'erard, the author extends the results by C. Bardos, G. Lebeau and J. Rauch \cite{9} that provide sufficient and almost necessary conditions for the exact controllability. The main contribution of this work consists in weakening the $C^\infty$ assumptions of \cite{9} on the regularity of the domain and of the coefficients. Indeed, the author proves that the same results hold when the domain is of class $C^3$ and the coefficients of the elliptic operator involved in the wave equation are of class $C^2$.

Finally, we would like to mention the important papers in connection with the exact controllability of transmission problems associated with the wave equation. The question of boundary controllability in problems of transmission has been considered by several authors. In particular Lions \cite{10} considered the system in the special case of two wave equations, namely,
\begin{eqnarray*}
\begin{cases}
 \partial_t^2 w_1 - a_1 \Delta w_1 =0 \quad \hbox{ in }\Omega_1 \times (0,T),\\
 \partial_t^2 w_2 - a_2 \Delta w_2 =0 \quad \hbox{ in }\Omega_2 \times (0,T),
\end{cases}
\end{eqnarray*}
where $\Omega, \Omega_1$ are a bounded open connected sets in $\mathbb{R}^n$ with smooth boundaries $\Gamma$ and $\Gamma_1$ respectively such that $\overline{\Omega_1} \subset \Omega$ and $\Omega_2:= \Omega\backslash \Omega_1$ whose boundary is $\Gamma_2 := \Gamma \cup \Gamma_1$. Here, $a_i>0$ ($i=1,2$) and $\Delta$ is the ordinary Laplacian in $\mathbb{R}^n$,
\begin{eqnarray*}
&&w_2 = h \hbox{ on }\Sigma=\Gamma \times (0,T), h \hbox{ is the control},\\
&& w_1=w_2,~a_1 \partial_\nu w_1= a_2 \partial_\nu w_2\hbox{ on }\partial \Omega_i,~i=1,2,\\
&& w_i|_{t=0}= \partial_t w_i|_{t=0}=0 \hbox{ on }\Omega_i,~i=1,2.
\end{eqnarray*}

Assuming that $\Omega_1$ is star shaped with respect to some point $x_0\in \Gamma_1$ and setting $\Gamma(x_0):=\{x\in \Gamma:(x-x_0)\cdot\nu(x)>0\}$, $\Sigma(x_0):=\Gamma(x_0) \times (0,T)$ where $\nu$ is the unit outer normal to $\Gamma$, Lions proved the exact boundary controllability assuming that $a_1>a_2$ and for $T>T(x_0)=2R(x_0)/\sqrt{a_2}$ and $R(x_0)=\max_{x\in \overline{\Omega_2}}|x-x_0|$.

Later on Lagnese \cite{26} generalized Lions \cite{10} by considering transmission problems for general second order linear hyperbolic systems having piecewise constant coefficients in a bounded, open connected set with smooth boundary and controlled through the Dirichlet boundary condition. It is proved that such a system is exactly controllable in an appropriate function space provided the interfaces where the coefficients have a jump discontinuity are all star shaped with respect to one and the same point and the coefficients satisfy a certain monotonicity condition.

Another interesting generalization of Lions \cite{10} has been considered by Liu \cite{27}. In this paper the author addresses the problem of control of the transmission wave equation. In particular, he considers the case where, due to total internal reflection of waves at the interface, the system may not be controlled from exterior boundaries. He shows that such a system can be controlled by introducing both boundary control along the exterior boundary and distributed control near the transmission boundary and give a physical explanation why the additional control near the transmission boundary might be needed for some domains.

To end this subsection we would like to quote the papers due to Nicaise \cite{28}, \cite{29} in which the author discusses the problem of exact controllability of networks of elastic polygonal membranes. The individual membranes are assumed to be coupled either at a vertex or along a whole common edge.  The author then derives energy estimates for regular solutions, which are then, by transposition, extended to weak solutions. As usual, direct and inverse energy inequalities of the sort shown establish a norm equivalence on a certain space (classically named $F$), the completion of which is the space in which the HUM-principle of Lions works. The space $F'$ then contains the null-controllable initial data. This space is weak enough to correspond to $L^2$-boundary controls along exterior edges satisfying sign conditions with respect to energy multipliers, to such controls along Dirichlet-edges, and, more importantly, to $H^1$-vertex controls at those vertices which are responsible for severe singularities. The corresponding solutions, for $(u_0,u_1)\in H\times V'$ with rather weak regularity $(C(0,T,D(A)'))$, are then shown to be null-controllable in a canonical finite time.

Another very nice paper that we would like to quote is the work of Miller \cite{24}, which although not related to controllability is very closed to the subject of investigation . This article deals with the propagation of high-frequency wave solutions to the scalar wave equation and to the Schr\"odinger equation. The results are formulated in terms of semiclassical measures (Wigner measures). The propagation is across a sharp interface between two inhomogeneous media. The author proves a microlocal version of Snell-Descartes's law of refraction which includes diffractive rays. Moreover, a radiation phenomenon for density of waves propagating inside an interface along gliding rays is illustrated. The measures of the traces of the solutions of the corresponding partial differential equations enable the author to derive some propagation properties for the measure of the solutions.

\subsection{Novel contribution of this work}

The primary goal of this article is to design a unified framework for boundary control theory associated to generalized wave equations (including the transmission problem admitting several zones of transmission). The novel features offered here are:

\begin{itemize}

\item The method presented allows us to give a unified form that simultaneously accommodates
domains with nonsmooth boundary (the most interesting case) or smooth boundary as well by considering the control of Dirichlet, Neuman or Robin type for generalized wave equations. In contrast, most of the currently available results on exact boundary controllability focus on either just smooth boundary or just nonsmooth ones.

\item In the context of controllability theory for wave equations with variable coefficients or even for constant coefficients, this paper is
the first to consider the case of the exact controllability from the boundary to generalized wave equations including the particular case of the transmission problems subject to several zones of transmission in contrast with the previous literature which takes into account just two zones of transmission.

\item It is worth mentioning that the boundary stabilization to problem \ref{CP} has been studied by Cardoso and Vodev in \cite{23}, but the boundary exact controllability  still remained an open problem.
\end{itemize}

\subsection{Outline of the arguments}

The method presented here is an extension of the pioneers works \cite{5}, \cite{3} that take advantage of decay rate estimates of the local energy obtained by scattering theory. While at that time when those papers \cite{5}, \cite{3} were written there were few results in this direction (\cite{12} and references therein), nowadays we have a wide assortment of nice results as in the works \cite{11}, \cite{23}, \cite{19}, \cite{1}, \cite{13}, \cite{2},  \cite{14}, and references therein.

Our special interest comes from the  work of Vodev \cite{1}, which extends previous results of the literature (see \cite{12}) regarding the uniform decay of local energy of the wave equation to more general perturbations (including the transmission problem) showing that any uniform decay of the local energy implies that it must decay like $\mathcal{O}(t^{-2n})$, ~$t \gg 1$, being the time and $n$ being the space dimension. As a particular case of the scattering theory obtained in previous studies we can mention the Theorem 1.5 of the most recent paper of the authors Cardoso and Vodev \cite{23}.

Finally we would like to observe that while in \cite{3} just the Dirichlet control has been considered for weak solutions of the wave equation with constant coefficients, in the present paper the control can be of Dirichlet, Neumann or Robin type also for weak solutions of the generalized wave equation. The two latest ones are much more delicate since it is not clear that the trace of the normal derivative of solutions of the generalized wave equation belong to $L^2$ of the lateral boundary of the domain.

In this direction the result obtained by Tataru \cite{4} regarding the regularity of boundary traces of the wave equation plays an essential role.  In the same spirit it is worth mentioning the paper due to Kim \cite{40} where the author is particularly interested in the regularity of those controls that can be obtained from Huyghen's principle for bounded convex domains of odd dimension and from an extension-inversion principle for general dimensions. He uses microlocal analysis to establish a regularity result for general second-order hyperbolic partial differential operators in an open domain of $\mathbb{R}^{n+1}$ (including the half-space). The result is then applied to the above-mentioned controllability problem in order to obtain trace regularity results, which in turn provide regularity results for the controls on an entire scale of "energy-spaces''.
Note that in \cite{5} also control of Neumann or Robin type were considered. However, for this purpose, regular solutions were considered, which is not the case in the present paper.

It is worth mentioning that the presence of the coefficients in the wave operator, as considered in the present paper, makes the analysis much more refined in terms of the rays of the geometrical optics.

In addition, since we are working in the exterior of an obstacle a nontrapping metric is crucial. While in the trapping case logarithmic local decay rate estimates can be obtained the controllability is no longer expected, at least for smooth boundaries, since it hurts severely the laws of the geometrical optics due to Bardos, Lebeau and Rauch \cite{9}.

From the above, the nice and old method introduced by Russel \cite{3} combined with a sharp scattering theory as in \cite{23}, \cite{19}, \cite{1}, \cite{13}, \cite{2},  \cite{14} and a powerful result of regularity of traces of the wave equation (or hyperbolic equations in general) as considered in Tataru \cite{4} are the main ingredients for treating the exact controllability of hyperbolic equations from the boundary posed in general domains.

\section{Proof of the main result}

We begin this section by developing some results from the Theorem \ref{theo1.1.1}, which are the fundamental ingredients to obtain the exact boundary controllability of the generalized wave equations.
\begin{lemma}\label{lem1.1.1}
Let $\Omega^\ast \subset \mathbb{R}^n$  as defined in Remark \ref{Omegastar}. Then, there exists a bounded linear operator $E_1:\widetilde{H}(\Omega^\ast)\longrightarrow \widetilde{H}(\Omega)$, such that for each $f \in \widetilde{H}(\Omega^\ast)$ we have that $E_1f|_{\Omega^\ast}=f$, $\supp E_1f\subset \Omega^{\ast}_{\delta}$ and $\|E_1f\|_{\widetilde{H}(\Omega)}\leq C\|f\|_{\widetilde{H}(\Omega^\ast)}$ for some constant $C>0$.
\end{lemma}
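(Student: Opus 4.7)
The strategy is to compose the extension operator $P^{\ast}$ provided by Assumption \ref{ass2} with multiplication by a smooth cutoff, and then control the resulting $\widetilde{H}(\Omega)$-norm by a Poincaré/Hardy-type argument. Fix $\delta>0$ small enough that $\overline{\Omega^{\ast}_{\delta}}\subset\Omega$, and pick $\phi\in C_{0}^{\infty}(\mathbb{R}^{n})$ with $\phi\equiv 1$ on an open neighborhood of $\overline{\Omega^{\ast}}$ and $\supp\phi\subset\Omega^{\ast}_{\delta}$. Define
\begin{equation*}
E_{1}f := \phi\cdot P^{\ast}f,
\end{equation*}
interpreted componentwise on the direct sum decomposition of $\widetilde{H}(\Omega)=\bigoplus_{k=1}^{N_{0}+1}\widetilde{H}_{k}$. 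The identity $E_{1}f|_{\Omega^{\ast}}=f$ is immediate from $\phi\equiv 1$ on $\Omega^{\ast}$ and $P^{\ast}f|_{\Omega^{\ast}}=f$, while the support condition follows from $\supp\phi\subset\Omega^{\ast}_{\delta}$.

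For boundedness, I would apply the Leibniz rule
\begin{equation*}
\partial_{x_{i}}(\phi P^{\ast}f)=\phi\,\partial_{x_{i}}P^{\ast}f+(\partial_{x_{i}}\phi)\,P^{\ast}f
\end{equation*}
in each component. The first term contributes at most $\|\phi\|_{\infty}\|P^{\ast}f\|_{\widetilde{H}(\Omega)}$, which by Assumption \ref{ass2} is controlled by $\|f\|_{\widetilde{H}(\Omega^{\ast})}$. Since $\supp\nabla\phi$ is a compact subset of $\Omega^{\ast}_{\delta}$, the second term is pointwise dominated by $\|\nabla\phi\|_{\infty}|P^{\ast}f|\chi_{\Omega^{\ast}_{\delta}}$, so the task reduces to proving the estimate
\begin{equation*}
\|P^{\ast}f\|_{L^{2}(\Omega^{\ast}_{\delta})}\leq C\,\|P^{\ast}f\|_{\widetilde{H}(\Omega)}.
\end{equation*}

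This last estimate is the main obstacle, and I would handle it componentwise. On each transmission zone $\mathcal{O}_{k}$, $k=1,\dots,N_{0}$, the set $\mathcal{O}_{k}$ is bounded and $(P^{\ast}f)|_{\mathcal{O}_{k}}$ belongs to $\widetilde{H}_{k}$, which by ellipticity of $g_{ij}^{(k)}$ and the fact that $\widetilde{H}_{k}$ is the closure of $C_{0}^{\infty}(\mathcal{O}_{k})$ in a norm equivalent to the $H_{0}^{1}(\mathcal{O}_{k})$ seminorm, allows the standard Poincaré inequality to give the desired control. On the outer, possibly unbounded component $\Omega_{N_{0}}$, functions in $\widetilde{H}_{N_{0}+1}$ vanish in the trace sense on $\partial\Omega_{N_{0}}\cap\partial\Omega$; a Hardy-type inequality (obtained for $n\geq 3$ from the Sobolev embedding $\dot{H}_{0}^{1}(\Omega_{N_{0}})\hookrightarrow L^{2n/(n-2)}(\Omega_{N_{0}})$ followed by Hölder on the bounded set $\Omega_{N_{0}}\cap\Omega^{\ast}_{\delta}$, and for $n=2$ from a localized Poincaré inequality on $\Omega_{N_{0}}\cap B_{R}$ for $R\gg 1$, using the non-empty Dirichlet trace on $\partial K$) gives the required bound.

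Assembling all components,
\begin{equation*}
\|E_{1}f\|_{\widetilde{H}(\Omega)}\leq \bigl(\|\phi\|_{\infty}+C\|\nabla\phi\|_{\infty}\bigr)\|P^{\ast}f\|_{\widetilde{H}(\Omega)}\leq C'\|f\|_{\widetilde{H}(\Omega^{\ast})},
\end{equation*}
which completes the proof. The only genuinely delicate step is the Poincaré/Hardy estimate on the unbounded outer component, since the Dirichlet seminorm does not a priori control the $L^{2}$-norm; everything else is a routine cutoff argument combined with Assumption \ref{ass2}.
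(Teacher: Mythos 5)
Your construction $E_{1}f=\phi\, P^{\ast}f$ is exactly the paper's ($E_{1}=M_{\varphi}\circ P^{\ast}$ with the same kind of cutoff), so the proposal is correct and takes the same route. The only difference is that the paper simply asserts that the multiplication operator $M_{\varphi}$ is bounded on $\widetilde{H}(\Omega)$ and stops there, whereas you correctly identify that, because $\widetilde{H}(\Omega)$ carries only the Dirichlet seminorm, this boundedness hinges on controlling the zeroth-order term $(\nabla\phi)P^{\ast}f$ in $L^{2}(\Omega^{\ast}_{\delta})$ --- which you supply via the Poincar\'e inequality on the bounded components $\mathcal{O}_{k}$ and a Hardy/Sobolev estimate on the unbounded component $\Omega_{N_{0}}$, a step the paper leaves entirely implicit.
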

\begin{proof}
Let $\varphi \in C_{0}^{\infty}(\Omega)$ be a function such that $\varphi=1$ in $\Omega^\ast$ and $\varphi=0$ in $\Omega \setminus \Omega_{\frac{\delta}{2}}$. Define $E_1f=\varphi P^\ast f$, where $P^*$ is given by Assumption \ref{ass2} and $f \in \widetilde{H}(\Omega^\ast)$. Clearly, $E_1:\widetilde{H}(\Omega^\ast)\longrightarrow \widetilde{H}(\Omega)$ is linear, $E_1f=f$ in $\Omega^\ast$ and $\supp E_1f \subset \Omega_{\delta}^{\ast}$ for all $f \in \widetilde{H}(\Omega^\ast)$. Noting that, $E_1=M_{\varphi} \circ P^\ast$, where $M_\varphi:\widetilde{H}(\Omega)\longrightarrow \widetilde{H}(\Omega)$, is the multiplication operator, defined by $M_{\varphi}(\psi)=\varphi\psi$, the boundedness follows.
\end{proof}

\begin{theorem}\label{theo2.1.1}
Let $f_1 \in \widetilde{H}(\Omega^\ast)$ and $f_2 \in L^2(\Omega^\ast)$ be functions with norm not identically zero and suppose that $\supp f_j \subset \Omega^\ast$, $j=1,2$. Let $u$ be the solution of the problem
\begin{equation*}
\begin{cases}
(\partial_{t}^{2}+ P)u=0 &~\hbox{ in }~ \Omega \times \mathbb{R}, \\
u = 0 &~\hbox{ on }~\partial \Omega \times \mathbb{R},\\
u(x,0)=f_1(x), ~\partial_t u(x,0)=f_2(x) &~\hbox{ in }~  \Omega.
\end{cases}
\end{equation*}
Then, there exists a positive constant $C$, independent of $f_1$ and $f_2$, such that
\begin{equation}\label{eq7.1.1}
\frac{\|\nabla_xu(x,t)\|_{L^2(\Omega^\ast)}+\|\partial_tu(x,t)\|_{L^2(\Omega^\ast)}}{\|\nabla_x u(x,0)\|_{L^2(\Omega^\ast)}+\|\partial_{t}u(x,0)\|_{L^2(\Omega^\ast)}}\leq
\begin{cases}
Ce^{-\gamma t}, &~\hbox{ if }~ n ~\hbox{ is odd, }~  \\
Ct^{-n}, &~\hbox{ if }~ n ~\hbox{ is even, }~
\end{cases},
\end{equation}
for each $t>T_0$.
\end{theorem}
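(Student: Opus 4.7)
The plan is to reduce the statement to Vodev's quantity $p_0(t)$ from (\ref{eq5.1.1}), which by Theorem \ref{theo1.1.1}(iii), activated under Assumption \ref{ass1}, decays like $Ce^{-\gamma t}$ or $Ct^{-n}$ according to the parity of $n$. The only work is to bridge the gap between the smooth data framework in which $p_0(t)$ is defined and the rough data $(f_1,f_2)\in \widetilde{H}(\Omega^\ast)\times L^2(\Omega^\ast)$ with support strictly inside $\Omega^\ast$, and to trade the norms on $B_a\cap \Omega$ appearing in the definition of $p_0(t)$ for norms on $\Omega^\ast\subset B_a\cap \Omega$.

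First I would extend $(f_1,f_2)$ by zero from $\Omega^\ast$ to all of $\Omega$; since $\supp f_j\Subset \Omega^\ast$, the extensions lie in $\widetilde{H}(\Omega)\times L^2(\Omega)=\mathcal{H}(\Omega)$ and Stone's theorem applied to $G$ produces the solution $u(t)$ as in (\ref{eq4.1.1}). Next I would pick approximating sequences $(\varphi_1^\epsilon,\varphi_2^\epsilon)\in C_0^\infty(\Omega^\ast)\times C_0^\infty(\Omega^\ast)$ with $\varphi_j^\epsilon\to f_j$ in $\widetilde{H}(\Omega^\ast)\times L^2(\Omega^\ast)$ (a standard density fact, since $\widetilde{H}(\Omega^\ast)$ is precisely the $C_0^\infty$-closure in the metric norm). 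Extended by zero these are admissible test data in (\ref{eq5.1.1}) because their support sits inside $\Omega^\ast\subset B_a$, and they are smooth on $\Omega$.

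The corresponding solutions $u^\epsilon$ then satisfy, directly from the definition of $p_0$ and Theorem \ref{theo1.1.1}(iii),
\begin{equation*}
\|\nabla_x u^\epsilon(\cdot,t)\|_{L^2(B_a\cap \Omega)}+\|\partial_t u^\epsilon(\cdot,t)\|_{L^2(B_a\cap\Omega)}\leq p_0(t)\bigl(\|\nabla_x\varphi_1^\epsilon\|_{L^2(B_a\cap\Omega)}+\|\varphi_2^\epsilon\|_{L^2(B_a\cap\Omega)}\bigr),
\end{equation*}
with $p_0(t)\leq Ce^{-\gamma t}$ (odd $n$) or $p_0(t)\leq Ct^{-n}$ (even $n$) for $t>T_0$. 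The left-hand side dominates the corresponding norms on $\Omega^\ast\subset B_a\cap\Omega$, and the right-hand side collapses to norms on $\Omega^\ast$ because $\supp \varphi_j^\epsilon\subset\Omega^\ast$. This yields the desired inequality (\ref{eq7.1.1}) for each smooth approximant.

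Finally I would pass to the limit $\epsilon\to 0$. By the unitarity of the group $e^{-itG}$ on $\mathcal{H}(\Omega)$, $u^\epsilon(\cdot,t)\to u(\cdot,t)$ in $\widetilde{H}(\Omega)\times L^2(\Omega)$ uniformly in $t$, so the left-hand sides converge; the right-hand sides converge because $\varphi_j^\epsilon\to f_j$ in the chosen norms. The only real subtlety, and the place where I would spend the most care, is the interplay between the energy norm induced by the metric $g_{ij}^{(k)}$ (which is what the group preserves and what natively controls $u^\epsilon$) and the Euclidean $L^2$ norm of $\nabla_x u$ that appears in the statement. Ellipticity of $P$, together with boundedness of $\Omega^\ast$ and the assumption that $\partial\Omega^\ast$ sits inside a single $\mathcal{O}_k$ or properly contains $\mathcal{O}_{N_0}$, makes the two norms equivalent on $\Omega^\ast$, which absorbs the discrepancy into the constant $C$. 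With this equivalence in hand, taking suprema (or simply passing the inequality to the limit) for $t>T_0$ delivers (\ref{eq7.1.1}).
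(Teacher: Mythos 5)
Your proof is correct and follows essentially the same route as the paper's: dominate the $\Omega^\ast$-norms at time $t$ by the $B_a\cap\Omega$-norms, use the support condition on $f_1,f_2$ to identify the norms at time $0$ with those on $B_a\cap\Omega$, and invoke Theorem \ref{theo1.1.1}(iii). The only difference is that you additionally justify the passage from the smooth compactly supported test data in the definition of $p_0(t)$ in (\ref{eq5.1.1}) to rough data in $\widetilde{H}(\Omega^\ast)\times L^2(\Omega^\ast)$ by density and unitarity of the group --- a detail the paper's proof takes for granted.
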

\begin{proof}
By elementary measure theory,
\begin{align*}
\|\nabla_xu(x,t)\|_{L^2(\Omega^\ast)}+\|\partial_tu(x,t)\|_{L^2(\Omega^\ast)} \leq & \|\nabla_xu(x,t)\|_{L^2(B_a \cap \Omega)}+\|\partial_tu(x,t)\|_{L^2(B_a \cap \Omega)}.
\end{align*}
Taking into account the supports of $f_1$ and $f_2$, we obtain
\begin{align*}
\|\nabla_x u(x,0)\|_{L^2(\Omega^\ast)}+\|\partial_{t}u(x,0)\|_{L^2(\Omega^\ast)}= \|\nabla_x u(x,0)\|_{L^2(B_a \cap\Omega)}+\|\partial_{t} u(x,0)\|_{L^2(B_a \cap\Omega)}.
\end{align*}
Therefore,
\begin{align*}
\frac{\|\nabla_xu(x,t)\|_{L^2(\Omega^\ast)}+\|\partial_tu(x,t)\|_{L^2(\Omega^\ast)}}{\|\nabla_x u(x,0)\|_{L^2(\Omega^\ast)}+\|\partial_{t}u(x,0)\|_{L^2(\Omega^\ast)}} \leq & \frac{\|\nabla_xu(x,t)\|_{L^2(B_a \cap \Omega)}+\|\partial_tu(x,t)\|_{L^2(B_a \cap \Omega)}}{\|\nabla_x u(x,0)\|_{L^2(B_a \cap\Omega)}+\|\partial_{t} u(x,0)\|_{L^2(B_a \cap\Omega)}}.
\end{align*}
From item iii) of Theorem \ref{theo1.1.1}, we obtain
\begin{align*}
\frac{\|\nabla_xu(x,t)\|_{L^2(\Omega^\ast)}+\|\partial_tu(x,t)\|_{L^2(\Omega^\ast)}}{\|\nabla_x u(x,0)\|_{L^2(\Omega^\ast)}+\|\partial_{t}u(x,0)\|_{L^2(\Omega^\ast)}} \leq& \frac{\|\nabla_xu(x,t)\|_{L^2(B_a \cap \Omega)}+\|\partial_tu(x,t)\|_{L^2(B_a \cap \Omega)}}{\|\nabla_x u(x,0)\|_{L^2(B_a \cap\Omega)}+\|\partial_{t} u(x,0)\|_{L^2(B_a \cap\Omega)}}\\
\leq& \begin{cases}
Ce^{-\gamma t}, &~\hbox{ if }~ n ~\hbox{ is odd, }~  \\
Ct^{-n}, &~\hbox{ if }~ n ~\hbox{ is even, }~
\end{cases},
\end{align*}
for each $t>T_0$.
\end{proof}

\begin{theorem}\label{theo3.1.1}
Let $f_1 \in \widetilde{H}(\Omega^\ast)$ and $f_2 \in L^2(\Omega^\ast)$ be functions with norm not identically zero and suppose that $\supp f_j \subset \Omega^\ast$, $j=1,2$. Let $u$ the solution of the problem
	\begin{equation*}
	\begin{cases}
	(\partial_{t}^{2}+ P)u=0 &~\hbox{ in }~ \Omega \times \mathbb{R}, \\
	u = 0 &~\hbox{ on }~\partial \Omega \times \mathbb{R},\\
	u(x,T)=f_1(x), ~\partial_t u(x,T)=f_2(x) &~\hbox{ in }~  \Omega.
	\end{cases}
	\end{equation*}
	Then, there exists a positive constant $C$, independent of $f_1$ and $f_2$, such that
	\begin{equation}\label{eq8.1.1}
\frac{\|\nabla_xu(x,0)\|_{L^2(\Omega^\ast)}+\|\partial_tu(x,0)\|_{L^2(\Omega^\ast)}}{\|\nabla_x u(x,T)\|_{L^2(\Omega^\ast)}+\|\partial_{t}u(x,T)\|_{L^2(\Omega^\ast)}}\leq
	\begin{cases}
	Ce^{-\gamma T}, &~\hbox{ if }~ n ~\hbox{ is odd, }~  \\
	CT^{-n}, &~\hbox{ if }~ n ~\hbox{ is even. }~
	\end{cases},
	\end{equation}
	for each $T>T_0$.
\end{theorem}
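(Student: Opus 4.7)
The plan is to exploit the time-reversibility of the wave operator $\partial_t^2 + P$ and reduce Theorem \ref{theo3.1.1} directly to the already-established Theorem \ref{theo2.1.1}. Since $P$ acts only in the spatial variables and $\partial_t^2$ is invariant under the reflection $t \mapsto T-t$, the Cauchy problem at hand is symmetric in time about $t = T/2$, so any decay estimate that bounds later energy in terms of initial energy yields, after reversal, a bound on earlier energy in terms of terminal energy.

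Concretely, I would introduce the reflected function $v(x,t) := u(x, T-t)$ for $(x,t) \in \Omega \times \mathbb{R}$. A direct computation gives $\partial_t v(x,t) = -\partial_t u(x, T-t)$ and $\partial_t^2 v(x,t) = \partial_t^2 u(x, T-t)$, so that $(\partial_t^2 + P)v = 0$ in $\Omega \times \mathbb{R}$ and $v = 0$ on $\partial\Omega \times \mathbb{R}$. The initial data of $v$ at $t=0$ are
\begin{equation*}
v(x,0) = u(x,T) = f_1(x) \in \widetilde{H}(\Omega^\ast), \qquad \partial_t v(x,0) = -\partial_t u(x,T) = -f_2(x) \in L^2(\Omega^\ast),
\end{equation*}
both supported in $\Omega^\ast$, so the pair $(f_1, -f_2)$ satisfies all the hypotheses of Theorem \ref{theo2.1.1}.

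The second step is to apply Theorem \ref{theo2.1.1} to $v$ and specialize to time $t = T > T_0$. Since the $L^2$-norms are insensitive to the sign flip on $\partial_t u$, one has the identifications
\begin{equation*}
\|\nabla_x v(\cdot,T)\|_{L^2(\Omega^\ast)} = \|\nabla_x u(\cdot,0)\|_{L^2(\Omega^\ast)}, \quad \|\partial_t v(\cdot,T)\|_{L^2(\Omega^\ast)} = \|\partial_t u(\cdot,0)\|_{L^2(\Omega^\ast)},
\end{equation*}
and
\begin{equation*}
\|\nabla_x v(\cdot,0)\|_{L^2(\Omega^\ast)} = \|\nabla_x u(\cdot,T)\|_{L^2(\Omega^\ast)}, \quad \|\partial_t v(\cdot,0)\|_{L^2(\Omega^\ast)} = \|\partial_t u(\cdot,T)\|_{L^2(\Omega^\ast)}.
\end{equation*}
Substituting these equalities into estimate \eqref{eq7.1.1} applied to $v$ at time $T$ yields \eqref{eq8.1.1} immediately.

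There is no substantive obstacle here beyond the bookkeeping check that the reflected problem fits the framework of Theorem \ref{theo2.1.1}. The only point that requires care is ensuring that the sign change on the velocity does not affect the norms and that the supports of the reflected data still lie inside $\Omega^\ast$, both of which are trivially verified. Thus, Theorem \ref{theo3.1.1} is essentially a corollary of Theorem \ref{theo2.1.1} via time reversal.
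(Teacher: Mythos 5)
Your proposal is correct and follows essentially the same route as the paper: both arguments exploit time reversal, passing to the solution of the Cauchy problem with data $(f_1,-f_2)$ at time $0$ (which, by uniqueness, coincides with $u(x,T-t)$), applying the forward decay estimate \eqref{eq7.1.1} of Theorem \ref{theo2.1.1} at time $T>T_0$, and noting that the sign flip on the velocity does not affect the $L^2$-norms. No further comment is needed.
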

\begin{proof}
Let $v$ be the solution of the problem
\begin{equation*}
	\begin{cases}
	(\partial_{\tau}^{2}+ P)v=0 &~\hbox{ in }~ \Omega \times \mathbb{R}, \\
	v = 0 &~\hbox{ on }~\partial \Omega \times \mathbb{R},\\
	v(x,0)=f_1(x), ~\partial_\tau v(x,0)=-f_2(x) &~\hbox{ in }~  \Omega.
	\end{cases}
	\end{equation*}
Applying the estimate \eqref{eq7.1.1} to $v$, we conclude that
\begin{equation}\label{eq9.1.1}
\frac{\|\nabla_xv(x,\tau)\|_{L^2(\Omega^\ast)}+\|\partial_\tau v(x,\tau)\|_{L^2(\Omega^\ast)}}{\|\nabla_x v(x,0)\|_{L^2(\Omega^\ast)}+\|\partial_{\tau} v(x,0)\|_{L^2(\Omega^\ast)}}\leq
\begin{cases}
Ce^{-\gamma \tau}, &~\hbox{ if }~ n ~\hbox{ is odd, }~  \\
C\tau^{-n}, &~\hbox{ if }~ n ~\hbox{ is even, }~
\end{cases},
\end{equation}
for $\tau>T_0$. Making $\tau=T-t$ in \eqref{eq9.1.1} and noting that $v(\cdot,T-t)=u(\cdot,t)$, which implies that $\partial_{\tau}v(\cdot,T-t)=-\partial_tu(\cdot,t)$, we obtain
\begin{equation}\label{eq10.1.1}
\frac{\|\nabla_xu(x,t)\|_{L^2(\Omega^\ast)}+\|\partial_tu(x,t)\|_{L^2(\Omega^\ast)}}{\|\nabla_xu(x,T)\|_{L^2(\Omega^\ast)}+\|\partial_{t}u(x,T)\|_{L^2(\Omega^\ast)}}\leq
\begin{cases}
Ce^{-\gamma (T-t)}, &~\hbox{ if }~ n ~\hbox{ is odd, }~  \\
C(T-t)^{-n}, &~\hbox{ if }~ n ~\hbox{ is even, }~
\end{cases},
\end{equation}
whenever $T-t>T_0$. Choosing $t=0$ in \eqref{eq10.1.1} we have the desired result.
\end{proof}
%%%%%%%%%%%%%%%%%%%%%%%%%%%%%%%%%%%%%%%%
\begin{corollary}\label{cor1.1.1}
Let $f_1 \in \widetilde{H}(\Omega^\ast)$ and $f_2 \in L^2(\Omega^\ast)$ be functions with norm not identically zero and suppose that $\supp f_j \subset \Omega^\ast$, $j=1,2$. Let $u$ be the solution of the problem
	\begin{equation*}
	\begin{cases}
	(\partial_{t}^{2}+ P)u=0 &~\hbox{ in }~ \Omega \times \mathbb{R}, \\
	u = 0 &~\hbox{ on }~\partial \Omega \times \mathbb{R},\\
	u(x,0)=f_1(x), ~\partial_t u(x,0)=f_2(x) &~\hbox{ in }~
	\Omega.
	\end{cases}
	\end{equation*}
Then, there exists a positive constant $C$, independent of $f_1$ and $f_2$, such that
	\begin{equation}\label{eq11.1.1}
	\frac{\|\nabla_xu(x,t)\|_{L^2(\Omega_{\delta}^{\ast})}^{2}+\|\partial_tu(x,t)\|_{L^2(\Omega_{\delta}^{\ast})}^{2}}{\|\nabla_x u(x,0)\|_{L^2(\Omega_{\delta}^{\ast})}^{2}+\|\partial_tu(x,0)\|_{L^2(\Omega_{\delta}^{\ast})}^{2}}\leq
	\begin{cases}
	C'e^{-2\gamma t}, &~\hbox{ if }~ n ~\hbox{ is odd, }~  \\
	C't^{-2n}, &~\hbox{ if }~ n ~\hbox{ is even, }~
	\end{cases},
	\end{equation}
for each $t>T_0$.
\end{corollary}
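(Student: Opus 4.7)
The plan is essentially to repeat the argument of Theorem \ref{theo2.1.1} applied to the slightly enlarged domain $\Omega_\delta^\ast$ and then square the resulting inequality. We arrange the parameters so that $\delta$ is small enough to guarantee $\Omega_\delta^\ast\subset B_a\cap\Omega$ (which is possible because $\Omega^\ast\subset B_a\cap\Omega$ is strict by Remark \ref{Omegastar}, after increasing $a$ if necessary). The hypothesis $\supp f_j\subset \Omega^\ast\subset \Omega_\delta^\ast$ transfers to the enlarged domain without modification.

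With this setup, one would first establish the linear version
\begin{equation*}
\frac{\|\nabla_x u(\cdot,t)\|_{L^2(\Omega_\delta^\ast)}+\|\partial_t u(\cdot,t)\|_{L^2(\Omega_\delta^\ast)}}{\|\nabla_x u(\cdot,0)\|_{L^2(\Omega_\delta^\ast)}+\|\partial_t u(\cdot,0)\|_{L^2(\Omega_\delta^\ast)}}\leq
\begin{cases} Ce^{-\gamma t}, & n\text{ odd},\\ Ct^{-n}, & n\text{ even},\end{cases}
\end{equation*}
exactly as in the proof of Theorem \ref{theo2.1.1}: the numerator is dominated by its counterpart over $B_a\cap\Omega$, while the denominator equals its counterpart over $B_a\cap\Omega$ because $\supp f_j\subset \Omega^\ast\subset \Omega_\delta^\ast$, so the $L^2$-masses coincide. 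An application of item iii) of Theorem \ref{theo1.1.1} (through $p_0$) closes the first step.

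Next I would square this inequality. Writing $A=\|\nabla_x u(\cdot,t)\|_{L^2(\Omega_\delta^\ast)}$, $B=\|\partial_t u(\cdot,t)\|_{L^2(\Omega_\delta^\ast)}$ and $C_0=\|\nabla_x u(\cdot,0)\|_{L^2(\Omega_\delta^\ast)}$, $D_0=\|\partial_t u(\cdot,0)\|_{L^2(\Omega_\delta^\ast)}$, the elementary inequalities $A^2+B^2\leq (A+B)^2$ and $(C_0+D_0)^2\leq 2(C_0^2+D_0^2)$ together give
\begin{equation*}
\frac{A^2+B^2}{C_0^2+D_0^2}\;\leq\; 2\left(\frac{A+B}{C_0+D_0}\right)^{\!2},
\end{equation*}
and squaring the decay rate yields the statement with $C':=2C^2$ and the doubled exponent or polynomial order.

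I do not anticipate a genuine obstacle here; the content is a mild corollary of Theorem \ref{theo2.1.1} obtained by enlarging the spatial window by $\delta$ and passing to squared norms. The only mild technicality is verifying that $\Omega_\delta^\ast\subset B_a\cap\Omega$, which is a free parameter choice and does not interfere with any later use of the corollary.
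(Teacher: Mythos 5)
Your argument is correct and is essentially the paper's own approach: the paper's proof of this corollary is the single line ``Follows directly from Theorem \ref{theo2.1.1},'' and what you wrote is exactly the natural filling-in of that line (rerun the Theorem \ref{theo2.1.1} comparison on the enlarged window $\Omega_\delta^\ast\subset B_a\cap\Omega$, then pass to squared norms via $A^2+B^2\le (A+B)^2$ and $(C_0+D_0)^2\le 2(C_0^2+D_0^2)$, absorbing the factor $2$ into $C'$). No gaps.
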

\begin{proof}
Follows directly from Theorem \ref{theo2.1.1}.
\end{proof}

\begin{corollary}\label{cor2.1.1}
	Let $f_1 \in \widetilde{H}(\Omega^\ast)$ and $f_2 \in L^2(\Omega^\ast)$ be functions with norm not identically zero and suppose that $\supp f_j \subset \Omega^\ast$, $j=1,2$.Let $u$ the solution of the problem
\begin{equation*}
\begin{cases}
(\partial_{t}^{2}+ P)u=0 &~\hbox{ in }~ \Omega \times \mathbb{R}, \\
u = 0 &~\hbox{ on }~\partial \Omega \times \mathbb{R},\\
u(x,T)=f_1(x), ~\partial_t u(x,T)=f_2(x) &~\hbox{ in }~  \Omega.
\end{cases}
\end{equation*}
Then, there exists a positive constant $C$, independent of $f_1$ and $f_2$, such that
\begin{equation}\label{eq12.1.1}
\frac{\|\nabla_xu(x,0)\|_{L^2(\Omega_{\delta}^{\ast})}^{2}+\|\partial_tu(x,0)\|_{L^2(\Omega_{\delta}^{\ast})}^{2}}{\|\nabla_x u(x,T)\|_{L^2(\Omega_{\delta}^{\ast})}^{2}+\|\partial_{t}u(x,T)\|_{L^2(\Omega_{\delta}^{\ast})}^{2}}\leq
\begin{cases}
C'e^{-2\gamma T}, &~\hbox{ if }~ n ~\hbox{ is odd, }~  \\
C'T^{-2n}, &~\hbox{ if }~ n ~\hbox{ is even, }~
\end{cases},
\end{equation}
for each $T>T_0$.
\end{corollary}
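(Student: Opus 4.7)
The plan is to reduce the corollary directly to Theorem \ref{theo3.1.1} by two elementary moves: replacing the domain $\Omega^\ast$ by $\Omega^\ast_\delta$ in the estimate, and then squaring. Since this mirrors exactly how Corollary \ref{cor1.1.1} was obtained from Theorem \ref{theo2.1.1}, no new machinery is needed.

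First I would observe that, by shrinking $\delta>0$ if necessary, we may assume $\Omega^\ast_\delta \subset B_a \cap \Omega$. The proof of Theorem \ref{theo3.1.1} uses only the inclusion of the underlying measurement domain inside $B_a \cap \Omega$ together with the support hypothesis $\supp f_j \subset \Omega^\ast$; neither of these is disturbed by enlarging $\Omega^\ast$ to $\Omega^\ast_\delta$. Repeating the argument of Theorem \ref{theo3.1.1} verbatim with $\Omega^\ast_\delta$ in place of $\Omega^\ast$ therefore yields
\begin{equation*}
\frac{\|\nabla_x u(x,0)\|_{L^2(\Omega^\ast_\delta)} + \|\partial_t u(x,0)\|_{L^2(\Omega^\ast_\delta)}}{\|\nabla_x u(x,T)\|_{L^2(\Omega^\ast_\delta)} + \|\partial_t u(x,T)\|_{L^2(\Omega^\ast_\delta)}} \leq M(T),
\end{equation*}
where $M(T) = C e^{-\gamma T}$ in odd dimension and $M(T)=C T^{-n}$ in even dimension, for each $T>T_0$.

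Second, I would square both sides. Using the elementary inequalities $a^2+b^2 \leq (a+b)^2$ and $(c+d)^2 \leq 2(c^2+d^2)$, valid for nonnegative reals, one obtains
\begin{equation*}
\frac{a^2+b^2}{c^2+d^2} \;\leq\; \frac{(a+b)^2}{(c+d)^2}\cdot\frac{(c+d)^2}{c^2+d^2} \;\leq\; 2\,\bigl(M(T)\bigr)^{2},
\end{equation*}
with $a,b$ equal to the norms at time $0$ and $c,d$ the norms at time $T$ on $\Omega^\ast_\delta$. Setting $C' := 2C^2$ gives exactly the claimed rates $C' e^{-2\gamma T}$ and $C' T^{-2n}$.

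There is no real obstacle here; the only point requiring a line of justification is the legitimacy of inflating $\Omega^\ast$ to $\Omega^\ast_\delta$ inside the scattering estimate, which is immediate from $\Omega^\ast_\delta \subset B_a \cap \Omega$ and the fact that Theorem \ref{theo1.1.1} controls the local energy on all of $B_a \cap \Omega$.
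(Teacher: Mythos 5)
Your argument is correct and uses exactly the same three ingredients as the paper --- the forward local-energy decay of Theorem \ref{theo2.1.1}, time reversal, inflation of the measurement domain from $\Omega^\ast$ to $\Omega^\ast_\delta$, and squaring --- only composed in a different order: the paper inflates and squares first (Corollary \ref{cor1.1.1}) and then time-reverses by applying that estimate to the solution with data $(f_1,-f_2)$ and substituting $\tau=T-t$, whereas you time-reverse first via Theorem \ref{theo3.1.1} and then inflate and square. Your explicit verification that the squaring only costs a factor of $2$ (so $C'=2C^2$), and your remark that the inflation is legitimate because the underlying bound of Theorem \ref{theo1.1.1} already controls the energy on all of $B_a\cap\Omega$ while the data remain supported in $\Omega^\ast\subset\Omega^\ast_\delta$, are details the paper leaves implicit.
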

\begin{proof}
Let $v$ be the solution of the problem
\begin{equation*}
\begin{cases}
(\partial_{\tau}^{2}+ P)v=0 &~\hbox{ in }~ \Omega \times \mathbb{R}, \\
v = 0 &~\hbox{ on }~\partial \Omega \times \mathbb{R},\\
v(x,0)=f_1(x), ~\partial_\tau v(x,0)=-f_2(x) &~\hbox{ in }~  \Omega.
\end{cases}
\end{equation*}
Applying the estimate \eqref{eq11.1.1} to $v$, we obtain
\begin{equation}\label{eq13.1.1}
\frac{\|\nabla_xv(x,\tau)\|_{L^2(\Omega_{\delta}^{\ast})}^{2}+\|\partial_\tau v(x,\tau)\|_{L^2(\Omega_{\delta}^{\ast})}^{2}}{\|\nabla_xv(x,0)\|_{L^2(\Omega_{\delta}^{\ast})}^{2}+\|\partial_{\tau} v(x,0)\|_{L^2(\Omega_{\delta}^{\ast})}^{2}}\leq
\begin{cases}
C'e^{-2\gamma \tau}, &~\hbox{ if }~ n ~\hbox{ is odd, }~  \\
C'\tau^{-2n}, &~\hbox{ if }~ n ~\hbox{ is even, }~
\end{cases},
\end{equation}
for $\tau>T_0$. Making $\tau=T-t$ in \eqref{eq13.1.1} and noting that $v(\cdot,T-t)=u(\cdot,t)$, we conclude that
\begin{equation}\label{eq14.1.1}
\frac{\|\nabla_xu(x,t)\|_{L^2(\Omega_{\delta}^{\ast})}^{2}+\|\partial_tu(x,t)\|_{L^2(\Omega_{\delta}^{\ast})}^{2}}{\|\nabla_x u(x,T)\|_{L^2(\Omega_{\delta}^{\ast})}^{2}+\|\partial_{t}u(x,T)\|_{L^2(\Omega_{\delta}^{\ast})}^{2}}\leq
\begin{cases}
C'e^{-2\gamma (T-t)}, &~\hbox{ if }~ n ~\hbox{ is odd, }~  \\
C'(T-t)^{-2n}, &~\hbox{ if }~ n ~\hbox{ is even, }~
\end{cases},
\end{equation}
whenever $T-t>T_0$. Choosing $t=0$ in \eqref{eq14.1.1} we have the desired result.
\end{proof}

With the above results, we can construct the operators necessary to obtain the exact boundary controllability.

Let $u$ be the solution of the problem
\begin{equation}\label{eq15.1.1}
\begin{cases}
(\partial_{t}^{2}+ P)u=0 &~\hbox{ in }~ \Omega \times \mathbb{R}, \\
u = 0 &~\hbox{ on }~\partial \Omega \times \mathbb{R},\\
u(x,0)=f_1(x), ~\partial_t u(x,0)=f_2(x) &~\hbox{ in }~  \Omega,
\end{cases}
\end{equation}
where $(f_1,f_2)\in \widetilde{H}(\Omega)\times L^2(\Omega)$ and $\supp f_j \subset \Omega^\ast$. Now, for $t\neq0$, we define
\begin{equation*}
S_t:\widetilde{H}(\Omega_{\delta}^{\ast})\times L^2(\Omega_{\delta}^{\ast}) \longrightarrow \widetilde{H}(\Omega)\times L^2(\Omega),
\end{equation*}
is given by $S_t(u(\cdot,0),\partial_{t}u(\cdot,0))=(u(\cdot,t),\partial_{t}u(\cdot,t))$, where $u$ is the solution of the problem \eqref{eq15.1.1}. From the linearity of the operator $P$, it follows that $S_t$ is linear. Taking into account the supports of $f_j$ with $j=1,2$, we have that
\begin{align*}
\|S_t(u(\cdot,0),\partial_{t}u(\cdot,0))\|_{\widetilde{H}(\Omega)\times L^2(\Omega)}^{2} =& \| u(\cdot,t)\|_{\widetilde{H}(\Omega)}^{2}+\|\partial_{t} u(\cdot,t)\|_{L^2(\Omega)}^{2} \\
=&\| u(\cdot,0)\|_{\widetilde{H}(\Omega)}^{2}+\|\partial_{t} u(\cdot,0)\|_{L^2(\Omega)}^{2} \\
=&\| u(\cdot,0)\|_{\widetilde{H}(\Omega_{\delta}^{\ast})}^{2}+\|\partial_{t} u(\cdot,0)\|_{L^2(\Omega_{\delta}^{\ast})}^{2} \\
=& \|(u(\cdot,0),\partial_{t}u(\cdot,0))\|_{\widetilde{H}(\Omega_{\delta}^{\ast})\times L^2(\Omega_{\delta}^{\ast})}^{2},
\end{align*}
which shows that $S_t$ is bounded.

Finally, the operator $S_0$ extends $f_1 \in \widetilde{H}(\Omega^\ast)$ and $f_2 \in L^2(\Omega^\ast)$ respectively, by zero outside $\Omega^\ast$. Which has the same characteristics of $S_t$, for $t\neq 0$.

Similarly, we define the operator $$ S_{T}^{\ast}:\widetilde{H}(\Omega_{\delta}^{\ast})\times L^2(\Omega_{\delta}^{\ast})\longrightarrow \widetilde{H}(\Omega)\times L^2(\Omega),$$
defined by $S_{T}^{\ast}(u(\cdot,T),\partial_{t}u(\cdot,T))=(u(\cdot,0),\partial_{t}u(\cdot,0))$ where $u$ is the solution of the problem
	\begin{equation*}
\begin{cases}
(\partial_{t}^{2}+ P)u=0 &~\hbox{ in }~ \Omega \times \mathbb{R}, \\
u = 0 &~\hbox{ on }~\partial \Omega \times \mathbb{R},\\
u(x,T)=f_1(x), ~\partial_t u(x,T)=f_2(x) &~\hbox{ in }~  \Omega,
\end{cases}
\end{equation*}
with $(f_1,f_2)\in \widetilde{H}(\Omega)\times L^2(\Omega)$ and $\supp f_j \subset \Omega^\ast$. The operator $S_{T}^{\ast}$ is linear and bounded.

Now, we present a result which together with the Trace Theorem, due to Tataru \cite{4}, allows us to solve the boundary control problem for the equation studied in this paper. This approach has been introduced by D.L.Russell in \cite{3}, to solve control problems for the wave equation.

\begin{proof}{(\bf Theorem \ref{theo4.1.1})} Let $\Omega_{\delta}^{\ast}$, with $\delta>0$, and the operator $E_1:\widetilde{H}(\Omega^{\ast}) \longrightarrow \widetilde{H}(\Omega)$ be as defined in Lemma \ref{lem1.1.1}. Let $E_0:L^2(\Omega^{\ast}) \longrightarrow L^2(\Omega)$ be the operator, which extends $w_1 \in L^2(\Omega^\ast)$ to a function $E_0w_1 \in L^2(\Omega)$ with support in $\Omega_{\delta}^{\ast}$. The operator $E:\widetilde{H}(\Omega^\ast)\times L^2(\Omega^\ast) \longrightarrow \widetilde{H}(\Omega)\times L^2(\Omega)$, defined by $E(w_0,w_1)=(E_1w_0,E_0w_1)$ is linear and continuous. Furthermore, $\supp E_1w_0,\supp E_0 w_1 \subset \Omega_{\delta}^{\ast}$ for all $(w_0,w_1) \in \widetilde{H}(\Omega^\ast)\times L^2(\Omega^\ast)$. Let $(w_0,w_1)\in \widetilde{H}(\Omega^\ast)\times L^2(\Omega^\ast)$ and $w_\delta$ be the solution of the problem
\begin{equation}
\begin{cases}\label{eq16.1.1}
(\partial_{t}^{2}+ P)w_\delta=0 &~\hbox{ in }~ \Omega \times \mathbb{R}, \\
w_\delta=0 & ~\hbox{ in } \partial \Omega,\\
w_\delta(x,0)=E_1w_0(x), ~\partial_t w_\delta(x,0)=E_0w_1(x) &~\hbox{ in }~ \Omega.
\end{cases}
\end{equation}
Let $T>T_0$ be a number to be chosen later and $\varphi \in C_{0}^{\infty}(\Omega)$ such that $\varphi=1 \in \Omega_{\frac{\delta}{2}}^{\ast}$ and $\varphi=0$ in the complement of $\Omega_{\frac{3\delta}{4}}^{\ast}$. Note that,
\begin{equation*}
(\varphi w_\delta(\cdot,T),\varphi\partial_tw_\delta(\cdot,T))\in \widetilde{H}(\Omega^{\ast}_{\delta})\times L^2(\Omega^{\ast}_{\delta}).
\end{equation*}
Let $z$ be the solution of the problem
\begin{equation}
\begin{cases}\label{eq17.1.1}
(\partial_{t}^{2}+ P)z=0 &~\hbox{ in }~ \Omega \times \mathbb{R}, \\
z=0 & ~\hbox{ on }~\partial \Omega, \\
z(\cdot,T)=\varphi w_\delta(\cdot,T), ~\partial_t z(\cdot,T)=\varphi \partial_{t} w_\delta(\cdot,T) &~\hbox{ on }~ \Omega.
\end{cases}
\end{equation}
Using the operators $S_T, \ S_{T}^{\ast}, \ M_\varphi$ and $E$, we can write
\begin{align*}
(\varphi w_\delta(\cdot,T),\varphi\partial_tw_\delta(\cdot,T) ) =& M_\varphi S_{T} (E_1w_0,E_0w_1) \\
=&M_\varphi S_{T}E(w_0,w_1)
\end{align*}
and
\begin{align*}
(z(\cdot,0),\partial_tz(\cdot,0) =& S_{T}^{\ast}(z(\cdot,T),\partial_tz(\cdot,T))\\
=&S_{T}^{\ast}(\varphi w_\delta(\cdot,T),\varphi\partial_{t}w_\delta(\cdot,T)) \\
=& S_{T}^{\ast}M_\varphi S_{T} E(w_0,w_1).
\end{align*}
Define
\begin{equation*}
\widetilde{u}=w_\delta-z.
\end{equation*}
Observe that $\widetilde{u}$ solves the problem
\begin{equation}
\begin{cases}\label{eq18.1.1}
(\partial_{t}^{2}+ P)\widetilde{u}=0 &~\hbox{ in }~ \Omega \times \mathbb{R}, \\
\widetilde{u}=0 & ~\hbox{ on }~ \partial \Omega, \\
\widetilde{u}(\cdot,0)=E_1w_0-z(\cdot,0), ~\partial_t \widetilde{u}(x,0)=E_0w_1-\partial_{t}z(\cdot,0) &~\hbox{ in }~ \Omega.
\end{cases}
\end{equation}
In addition, the following conditions are verified
\begin{equation*}
\begin{cases}
\widetilde{u}(\cdot,T)=w_\delta(\cdot,T)-\varphi w_\delta(\cdot,T)=(1-\varphi(\cdot))w_\delta(\cdot,T) \\
\partial_{t}\widetilde{u}(\cdot,T)=\partial_{t}w_\delta(\cdot,T)-\varphi\partial_{t}w_\delta(\cdot,T)=(1-\varphi(\cdot))\partial_{t}w_\delta(\cdot,T).
\end{cases}
\end{equation*}
Taking into account that $\varphi=1$ in $\Omega_{\frac{\delta}{2}}^{\ast}$, we obtain
\begin{equation}\label{eq19.1.1}
\widetilde{u}(\cdot,T)=\partial_{t}\widetilde{u}(\cdot,T)=0 \hbox{ \ in \ } \Omega^{\ast}.
\end{equation}
Since we are interested in solving the control problem with initial data $(f_1,f_2)\in \widetilde{H}(\Omega^\ast)\times L^2(\Omega^\ast)$, it would be interesting if we had
\begin{equation*}
\widetilde{u}(\cdot,0)=f_1 \hbox{ \ and \ } \partial_{t} \widetilde{u}(\cdot,0)=f_2 \hbox{ \ in \ } \Omega^{\ast}.
\end{equation*}
This is equivalent to solving, for the unknown $(w_0,w_1) \in \widetilde{H}(\Omega^\ast)\times L^2(\Omega^\ast)$, the system
\begin{equation}
\begin{cases}\label{eq20.1.1}
E_1w_0-z(\cdot,0)=f_1 &~\hbox{ in }~ \Omega^\ast, \\
E_0w_1-\partial_{t}z(\cdot,0)=f_2 &~\hbox{ in }~ \Omega^\ast.
\end{cases}
\end{equation}
In terms of the operators $E, \ S_{T}, \ S_{T}^{\ast}$ and $M_\varphi$ we can rewrite this system as
\begin{equation}\label{eq21.1.1}
E(w_0,w_1)-S_{T}^{\ast}M_{\varphi}S_{T}E(w_0,w_1)=(f_1,f_2) \hbox{ \ in \ } \Omega^\ast.
\end{equation}
Let $R$ the restriction to $\Omega^\ast$, then the equation \eqref{eq21.1.1} can be written as
\begin{equation}\label{eq22.1.1}
(I-RS_{T}^{\ast}M_{\varphi}S_{T}E)(w_0,w_1)=(f_1,f_2).
\end{equation}
Introducing the operator $K_{T}=RS_{T}^{\ast}M_{\varphi}S_{T}E$, the equation \eqref{eq22.1.1} becomes
\begin{equation}\label{eq23.1.1}
(I-K_{T})(w_0,w_1)=(f_1,f_2).
\end{equation}
Next, we present a diagram with the definition of the operator $K_T$:
\begin{figure}[H]
\begin{center}
	\includegraphics{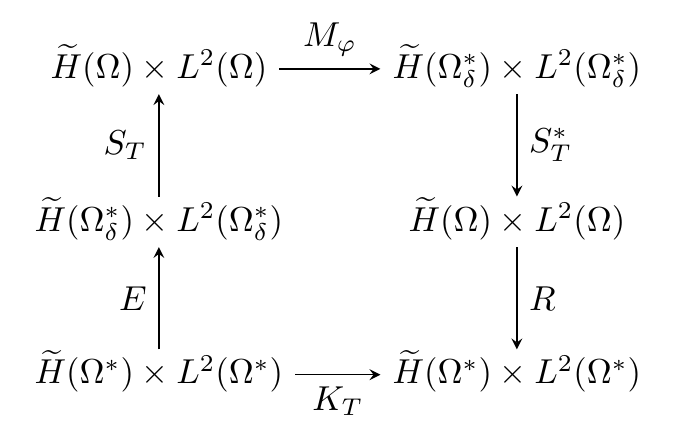}
\end{center}
	\caption{Definition of the operator $K_T$.}
	%\label{fig:XXXXX}
\end{figure}

The operator $K_{T}:\widetilde{H}(\Omega^{\ast})\times L^2(\Omega^\ast)\rightarrow \widetilde{H}(\Omega^{\ast})\times L^2(\Omega^\ast)$ is linear and bounded. By Neumann's Theorem, the equation \eqref{eq23.1.1} has a solution if we prove that $K_{T}$ is a contraction for $T$ sufficiently large. Observe that,
\begin{align*}
\|K_T(w_0,w_1)\|_{\widetilde{H}(\Omega^{\ast})\times L^2(\Omega^\ast)}^{2}=&\|(z(\cdot,0),\partial_{t}z(\cdot,0)\|_{\widetilde{H}(\Omega^{\ast})\times L^2(\Omega^\ast)}^{2} \\
\leq & \|(z(\cdot,0),\partial_{t}z(\cdot,0)\|_{\widetilde{H}(\Omega_{\delta}^{\ast})\times L^2(\Omega_{\delta}^{\ast})}^{2} \\
=&\|S_{T}^{\ast}M_\varphi S_T E(w_0,w_1)\|_{\widetilde{H}(\Omega_{\delta}^{\ast})\times L^2(\Omega_{\delta}^{\ast})}^{2} .
\end{align*}
Using the estimate \eqref{eq12.1.1}, we obtain that
\begin{align*}
\|S_{T}^{\ast}M_\varphi S_T E(w_0,w_1)\|_{\widetilde{H}(\Omega_{\delta}^{\ast})\times L^2(\Omega_{\delta}^{\ast})}^{2} \leq&
\begin{cases}
C'e^{-2\gamma T}\|M_\varphi S_T E(w_0,w_1)\|_{\widetilde{H}(\Omega_{\delta}^{\ast})\times L^2(\Omega_{\delta}^{\ast})}^{2}, &~\hbox{ if }~ n ~\hbox{ is odd, }~  \\
C'T^{-2n}\|M_\varphi S_T E(w_0,w_1)\|_{\widetilde{H}(\Omega_{\delta}^{\ast})\times L^2(\Omega_{\delta}^{\ast})}^{2}, &~\hbox{ if }~ n ~\hbox{ is even, }~
\end{cases},
\end{align*}
Also,
\begin{equation*}
\|M_\varphi S_T E(w_0,w_1)\|_{\widetilde{H}(\Omega_{\delta}^{\ast})\times L^2(\Omega_{\delta}^{\ast})}^{2}\leq \alpha\|S_TE(w_0,w_1)\|_{\widetilde{H}(\Omega_{\delta}^{\ast})\times L^2(\Omega_{\delta}^{\ast})}^{2},
\end{equation*}
where $\alpha$ depends only $\varphi$.
Hence,
\begin{align*}
\|K_T(w_0,w_1)\|_{\widetilde{H}(\Omega^\ast)\times L^2(\Omega^\ast)}^{2} \leq&
\begin{cases}
\alpha C'e^{-2\gamma T}\| S_T E(w_0,w_1)\|_{\widetilde{H}(\Omega_{\delta}^{\ast})\times L^2(\Omega_{\delta}^{\ast})}^{2}, &~\hbox{ if }~ n ~\hbox{ is odd, }~  \\
\alpha C'T^{-2n}\| S_T E(w_0,w_1)\|_{\widetilde{H}(\Omega_{\delta}^{\ast})\times L^2(\Omega_{\delta}^{\ast})}^{2}, &~\hbox{ if }~ n ~\hbox{ is even, }~
\end{cases}.
\end{align*}
Using the estimate $\eqref{eq11.1.1}$, we have
\begin{equation*}
\| S_T E(w_0,w_1)\|_{\widetilde{H}(\Omega_{\delta}^{\ast})\times L^2(\Omega_{\delta}^{\ast})}^{2} \leq \begin{cases}
 C'e^{-2\gamma T}\| E(w_0,w_1)\|_{\widetilde{H}(\Omega_{\delta}^{\ast})\times L^2(\Omega_{\delta}^{\ast})}^{2}, &~\hbox{ if }~ n ~\hbox{ is odd, }~  \\
 C'T^{-2n}\| E(w_0,w_1)\|_{\widetilde{H}(\Omega_{\delta}^{\ast})\times L^2(\Omega_{\delta}^{\ast})}^{2}, &~\hbox{ if }~ n ~\hbox{ is even, }~
\end{cases}.
\end{equation*}
From the boundedness of the operator $E$, we obtain the existence of a positive constant $C''$, such that
\begin{align*}
 \|K_T(w_0,w_1)\|_{\widetilde{H}(\Omega^{\ast})\times L^2(\Omega^\ast)}^{2}
 \leq& \begin{cases}
C''e^{-4\gamma T}\|  (w_0,w_1)\|_{\widetilde{H}(\Omega^\ast)\times L^2(\Omega^\ast)}^{2}, &~\hbox{ if }~ n ~\hbox{ is odd, }~  \\
C''T^{-4n}\| (w_0,w_1)\|_{\widetilde{H}(\Omega^\ast)\times L^2(\Omega^\ast)}^{2}, &~\hbox{ if }~ n ~\hbox{ is even, }~
\end{cases},
\end{align*}
for $T>T_0$. Now we fix $T>T_0$ such that $C''e^{-4\gamma T}<1$ and $C''T^{-4n}<1$, so that $K_T$ is a contraction in $\widetilde{H}(\Omega^\ast)\times L^2(\Omega^\ast)$. Let $(w_0,w_1) \in \widetilde{H}(\Omega^\ast)\times L^2(\Omega^\ast)$ be the unique solution to \eqref{eq23.1.1}. Now we define
\begin{equation*}
(\widetilde{f}_1,\widetilde{f}_2)=E(w_0,w_1)-S_{T}^{\ast}M_{\varphi}S_{T}E(w_0,w_1)
\end{equation*}
and observe that $(\widetilde{f}_1,\widetilde{f}_2)$ is an extension of $(f_1,f_2)$ to $\Omega$.

Using these extensions as initial data, we solve the problem
\begin{equation}\label{eq24.1.1}
\begin{cases}
(\partial_{t}^{2}+ P)\widetilde{u}=0 &~\hbox{ in }~ \Omega \times \mathbb{R}, \\
\widetilde{u}=0 & ~\hbox{ on }~ \partial \Omega, \\
\widetilde{u}(x,0)=\widetilde{f}_1(x), ~\partial_t \widetilde{u}(x,0)=\widetilde{f}_2(x) &~\hbox{ in }~  \Omega,
\end{cases}
\end{equation}
and we note that $\widetilde{u}$ satisfies $\widetilde{u}(\cdot,T)=\partial_t\widetilde{u}(\cdot,T)=0$ in $\Omega^\ast$.

Observing that $\partial_{t}^2\widetilde{u}+P\widetilde{u} \in L_{loc}^{2}(\Omega^\ast \times ]0,T[)$ we conclude from Assumption \ref{Tataru} that the conormal derivative, $\partial_\nu \widetilde{u}$,  is square integrable over each smooth part of $\partial \Omega^\ast \times ]0,T[$. Pasting these traces we can define the desired control in $L^2(\partial \Omega^* \times ]0,T[)$. Now, defining $u:=\widetilde{u}|_{\Omega^\ast\times[0,T]}$ and $g:=B^\ast \widetilde{u} $ where $B^{\ast}\widetilde{u}=\alpha \widetilde{u}+\beta \partial_\nu \widetilde{u}$, for $\alpha, \beta \in \mathbb{R}$ and $\alpha^2+\beta^2\neq 0$, from the construction, we see that $u$ solves the problem
\begin{equation*}
\begin{cases}
(\partial_{t}^{2}+ P)u=0 &~\hbox{ in }~ \Omega^\ast \times (0,T), \\
B^\ast u = g &~\hbox{ on }~\partial \Omega^\ast \times (0,T),\\
u=0  &~\hbox{ on }~\partial \Omega \times (0,T),\\
u(x,0)=f_1(x), ~\partial_t u(x,0)=f_2(x) &~\hbox{ on }~ \Omega^\ast,
\end{cases}
\end{equation*}
and satisfies the conditions $u(\cdot,T)=\partial_{t}u(\cdot,T)=0$ in $\Omega^\ast$.
\end{proof}

\newpage

\section{Final Remarks}

The following section summarizes the new contributions of the present paper compared with the works cited in the introduction.

		\begin{table}[H]
			\begin{center}
				\resizebox{0.99999\textwidth}{!}{
		\begin{tabular}{|c | c | c | c| }
			
			\hline 		\multicolumn{4}{|c|}{ \fonte{Summary of the literature with respect to boundary controllability to problem $ u_{tt}-\Delta u=0 $}} \\ \hline
			\fonte{Authors}	& \fonte{ Control}     & \fonte{Setting} & \fonte{Tools/Comments}\\
			\hline
			\fonte{C. Bardos, G. Lebeau and J. Rauch \cite{9}} &   \makecell{ \fonte{$B$ is a differential operator} \\  \fonte{of degree zero or one with smooth coefficients,}\\ \fonte{and $\partial M$ is noncharacteristic for $B$.}}  & \fonte{ Riemannian.}
			& \begin{minipage} [c] {0.4\textwidth} \vspace{.2cm}
			\begin{itemize}
					\done \fonte{Smooth coeffients.}
					\done Microlocal Analysis.
					\done  Unique continuation.
					\done Ultra-weak solutions.
					\crossed Transmission Problem.
				\end{itemize}   \vspace{.2cm}
			\end{minipage}\\
			\hline
				\fonte	W. D. Bastos and A. Spezamiglio \cite{6} &  \fonte Robin  & \fonte Euclidean.
			&  \begin{minipage} [c] {0.4\textwidth} \vspace{.2cm}
				\begin{itemize}
					\done \fonte Curved polygon.
					\crossed Variable coefficients.
					\done Microlocal Analysis.
					\crossed Ultra-weak solutions.
					\crossed Transmission Problem.
				\end{itemize}  \vspace{.2cm}
			\end{minipage}\\
			\hline
			\fonte	N. Burq  \cite{18}&   \fonte Dirichlet & \fonte Riemannian.
			& \begin{minipage} [c] {0.4\textwidth} \vspace{.2cm}
				  \begin{itemize}
					\done \fonte  Microlocal analysis.
					\done Nonsmooth variable coefficients.
					\crossed Nonsmooth boundary.
					\crossed Transmission Problem.
				\end{itemize}   \vspace{.2cm}
			\end{minipage}\\
		\hline
	\fonte	F. Cardoso and G. Vodev \cite{23} &  \fonte $\times$  & \fonte Euclidean endowed with a Riemannian metric.
		&  \begin{minipage} [c] {0.4\textwidth} \vspace{.2cm}
			  \begin{itemize}
				\done  \fonte Local energy decay.
				\done Exponential decay.
				\done Resolvent estimates.
				\done Transmission Problem.
				\crossed Boundary exact controllability.
			\end{itemize}  \vspace{.2cm}
		\end{minipage}\\
			\hline
			\fonte	P. Grisvard \cite{8}& \fonte  Neumann & \fonte Euclidean.
			&  \begin{minipage} [c] {0.4\textwidth} \vspace{.2cm}
				  \begin{itemize}
					\crossed \fonte Microlocal analysis.
					\crossed Variable coefficients.
					\done Nonsmooth boundary.
					\crossed Transmission Problem.
					\done Mixed boundary conditions i. e., when singular solutions occur.
				\end{itemize}  \vspace{.2cm}
			\end{minipage}\\
			\hline
			\fonte	J. U. Kim \cite{40}& \fonte   Robin & \fonte Euclidean.
			&   \begin{minipage} [c] {0.4\textwidth} \vspace{.2cm}
				  \begin{itemize}
					\done \fonte  Microlocal analysis.
					\done Variable coefficients.
					\crossed Nonsmooth boundary.
					\crossed Transmission Problem.
					\done Trace regularity.
				\end{itemize}  \vspace{.2cm}
			\end{minipage}\\
			\hline
			\fonte J. Lagnese \cite{26}&  \fonte Dirichlet & \fonte Euclidean.
			&   \begin{minipage} [c] {0.4\textwidth} \vspace{.2cm}
				  \begin{itemize}
					\crossed  \fonte Microlocal analysis.
					\crossed Variable coefficients.
     				\done Piecewise constant coefficients.
					\crossed Nonsmooth boundary.
					\done Transmission Problem (two regions).
					\end{itemize}  \vspace{.2cm}
			\end{minipage}\\
			\hline
		\fonte	I. Lasiecka, R. Triggiani, and P. F. Yao \cite{21}, \cite{31}& \fonte  Dirichlet/Neumann & \fonte Riemannian.
			&   \begin{minipage} [c] {0.4\textwidth} \vspace{.2cm}
				  \begin{itemize}
					\done \fonte Carleman estimates.
					\done Interior controllability.
					\done Variable coefficients $(C^2/C^1)$.
					\crossed Nonsmooth boundary.
					\crossed Transmission Problem.
				\end{itemize}  \vspace{.2cm}
			\end{minipage}\\
			\hline
			\fonte	J. L. Lions \cite{10}&  \fonte Dirichlet & \fonte Euclidean.
			&   \begin{minipage} [c] {0.4\textwidth} \vspace{.2cm}
				  \begin{itemize}
					\done \fonte Interior controllability.
					\crossed Variable coefficients.
					\crossed Nonsmooth boundary.
					\done Transmission Problem (two regions).
				\end{itemize}  \vspace{.2cm}
			\end{minipage}\\
			\hline
		\fonte	S. Nicaise \cite{28}, \cite{29}	& \fonte Robin & \fonte Euclidean.
			&   \begin{minipage} [c] {0.4\textwidth} \vspace{.2cm}
				  \begin{itemize}
					\done \fonte Exact controllability of networks of elastic polygonal membranes.
					\crossed Variable coefficients.
					\crossed Microlocal Analysis.
									\end{itemize}  \vspace{.2cm}
			\end{minipage}\\
			\hline
			
		\fonte	D. L. Russell \cite{3}	& \fonte Dirichlet & \fonte Euclidean.
			&   \begin{minipage} [c] {0.4\textwidth} \vspace{.2cm}
				  \begin{itemize}
				\done \fonte Scattering theory results.
				\done Nonsmooth boundary.
				\crossed Neumann controllability.
				\end{itemize}  \vspace{.2cm}
			\end{minipage}\\
			\hline
		\fonte	{\bf Present article}	&  \fonte Robin &  \fonte \makecell{Euclidean domain with smooth obstacles,\\ \fonte endowed with a Riemannian metric.}
			&   \begin{minipage} [c] {0.4\textwidth} \vspace{.2cm}
				  \begin{itemize}
					\done \fonte Scattering theory results.
					\done Tataru's results about trace regularity.
					\done Variable jumped coefficients.
					\done Nonsmooth boundary.
					\done Transmission Problem for $n-$regions.
					\crossed Controllability for ultra-weak solutions.
				\end{itemize}  \vspace{.2cm}
			\end{minipage}\\
			\hline
		\end{tabular}}
	\end{center}
	\end{table}

\end{document}